\theoremstyle{plain}
\newtheorem{theorem}{Theorem}[section]
\newtheorem{proposition}[theorem]{Proposition}
\newtheorem{remark}[theorem]{Remark}
\numberwithin{equation}{section}
\numberwithin{table}{section}
\numberwithin{figure}{section}
\newcommand{\RR}{\mathbb{R}}
\begin{document}

\setlength{\headheight}{14pt}
\fancypagestyle{mypagestyle}{%
  \renewcommand{\headrulewidth}{0.1pt}%
  \renewcommand{\footrulewidth}{0.1pt}%
  \fancyhf{}%
  \fancyhead[LO,RE]{\scriptsize Nonuniqueness in discontinuous dynamical systems}%
  \fancyhead[RO,LE]{\scriptsize A. And\`o, R. Edwards and N. Guglielmi}%
  \fancyfoot[LE,RO]{\scriptsize \thepage\,/\,\pageref*{LastPage}}%
}
\pagestyle{mypagestyle}

\title{Nonuniqueness phenomena in discontinuous dynamical systems and their regularizations  
}

\author{A. And\`o \thanks{Gran Sasso Science Institute, Viale Francesco Crispi 7, 67100 L'Aquila AQ, Italy (\texttt{alessia.ando@gssi.it}, \texttt{nicola.guglielmi@gssi.it}).}
        \and R. Edwards\thanks{Dept. of Mathematics and Statistics, University of Victoria, P.O. Box 1700 STN CSC, Victoria, B.C., Canada, V8W 2Y2
  (\texttt{edwards@uvic.ca}).}
        \and N. Guglielmi\footnotemark[1]}

\date{\today}

\maketitle

\begin{abstract}
\noindent In a recent paper by Guglielmi and Hairer [\emph{SIAM J. Appl. Dyn. Syst.}, 14 (2015), pp. 1454-1477], an analysis in the $\varepsilon\to 0$ limit was proposed of regularized discontinuous ODEs in codimension-2 switching domains; this was obtained by studying a certain 2-dimensional system describing the so-called hidden dynamics. In particular, the existence of a unique limit solution was not proved in all cases, a few of which were labeled as ambiguous, and it was not clear whether or not the ambiguity could be resolved. 
In this paper, we show that it cannot be resolved in general. A first contribution of this paper is an illustration of the dependence of the limit solution on the form of the switching function. Considering the parameter dependence in the ambiguous class of discontinuous systems, a second contribution is a bifurcation analysis, revealing a range of possible behaviors. Finally, we investigate the sensitivity of solutions in the transition from codimension-2 domains to codimension-3 when there is a limit cycle in the hidden dynamics.
\end{abstract}

\smallskip
\noindent {\bf{Keywords:}}    piecewise smooth ODEs, regularization, singular perturbation, sensitive dependence, switching function, Hopf bifurcation, gene regulatory network

\smallskip
\noindent{\bf{2010 Mathematics Subject Classification:}} 34A09, 34A36, 34C23, 34E15, 65L11, 92C42
\smallskip

\section{Introduction}

   Piecewise smooth ordinary differential equations (ODEs) find applications in fields as diverse as control engineering, social and financial sciences, gene regulatory networks \cite{Acary2008,diBernardo2008,EdGlass2000,MP2011}, and many others. Resorting to them is  not only necessary to, for example, capture adequately, e.g., phenomena which are regulated by switching devices or actions that are triggered as soon as a continuous variable reaches a certain threshold, but also, as is often the case with gene regulatory networks \cite{DeJong2004a, DeJong2004b, Glass2018, gp78a, Grognard2007, Ropers2006}, to replace prohibitively stiff continuous systems whose numerical integration would otherwise be unfeasible, or to capture qualitative dynamics of systems whose functional forms and parameters cannot easily be determined by experiment.
   
   However, integrating discontinuous problems of this kind inevitably entails difficulties due to the potential nonuniqueness of the solution close to the intersection of two or more discontinuity surfaces. Additionally, it can be argued, as is discussed by di Bernardo and coauthors~\cite[p.1]{diBernardo2008}, that physical phenomena are often in fact intrinsically smooth, and using discontinuous models in order to describe them means introducing an artifact anyway. In view of the above, it is sound to approach the numerical integration by first \emph{regularizing} the problems, {\em i.e.}, suitably replacing the jump discontinuities by continuous transitions, and then considering the solution obtained as a certain regularization parameter tends to 0, {\em i.e.}, as the regularized problem approaches the original one. 
   One of the main aims of this work is to emphasize how crucial the choice of a suitable regularization function can be in determining the solution of the switching system. In other words, the problem of nonuniqueness cannot be solved completely.

  The issue has been raised previously, e.g., in \cite{as1998,Dieci2013,gh2015}, where it was observed that the Filippov approach in the most general sense \cite{Filippov} no longer suffices to determine the solution
of the switching system as a limit of a regularization upon entering the intersection of two discontinuity hypersurfaces, since the conditions to stay in such an intersection are two but a convex combination of four vector fields has three degrees of freedom.  Indeed, there are many approaches to characterizing the {\em hidden dynamics}~\cite{Jeffrey2014}, and the most appropriate approach will to some extent depend on the context. See reference~\cite{Jeffrey2022} for a recent summary of these issues. For many problems, considering the discontinuity to be the limit of a smooth monotone regularization is most appropriate.

In order to study the behavior of the solution in codimension-2 discontinuity domains, Guglielmi and Hairer \cite{gh2015} studied, therefore, a bilinear regularization (also called \emph{blending}~\cite{as1998}) and described the hidden dynamics in such domains by applying the technique of asymptotic expansions for singularly perturbed problems to the regularized differential equations, where
the regularization is monotonic. Indeed, in most cases, this approach allows one to predict the behavior of the regularization limit when the Filippov approach gives multiple solutions of the discontinuous problem after entering a codimension-2 domain. However, in a few cases, represented by \cite[(a3) and (a4) in Theorem 6.2]{gh2015}, even the singular perturbation approach is unable to provide a definitive answer, and resorting to the numerical integration inside the switching domain seems a necessary step. The present work aims to clarify the meaning of nonuniqueness in these ambiguous cases, {\em i.e.}, to show that the hidden dynamics depends indeed on the chosen switching function.

   In the particular context of modelling gene-regulatory networks, similar issues have been described. Dynamics at intersections of threshold hyperplanes cannot be avoided, as, for example, when one gene product is confined to its threshold (in a \emph{black wall}) while another slides until it also reaches its threshold, or when two variables experience a damped oscillation around their threshold intersection and converge to it in finite time~\cite{Edwards2000}. Similar finite-time convergence can occur for damped oscillations of three or more gene products, leading to threshold intersections of three or more variables. 

   Several studies in the gene network context have explored the difficulties of the usual Filippov method, using the convex hull of the vector fields adjacent to an intersection of discontinuity surfaces, \emph{versus} alternative approaches with a more restrictive definition of Filippov solution, or a singular perturbation approach, in which the discontinuity is seen as a limit of smooth regulatory functions (usually Hill functions)~\cite{pk2005}. The bilinear, or blended, regularization mentioned above is a special case of a \emph{Filippov solution in the narrow sense}~\cite{MP2011}, which corresponds more closely, but not exactly, to solutions of the singular perturbation limit~\cite{MEV2013a}.

   Furthermore, solutions of systems with nontrivial hidden dynamics (not simply convergence to a stable fixed point) can be handled by an extension of the singular perturbation approach~\cite{Artstein2002}. However, even then it was observed in an example that nonuniqueness can arise again at a quadruple threshold intersection when there is a limit cycle in the hidden dynamics of three variables~\cite{MEV2013b}. This nonuniqueness in the discontinuous limit hides a sensitive dependence that exists in steep Hill-function perturbations from the limit, in which the basins of attraction of different attractors are densely interwoven (infinitely densely in the limit). Another objective of the current paper is to explore this type of behavior in a more general class of switching systems (not necessarily Hill functions) in which there is periodic oscillation in only two switching variables.

   Since we are drawing on ideas from the study of gene networks, where Hill functions are natural, to deal with switching systems of a more general class in the prior work of Guglielmi and Hairer~\cite{gh2015,gh2022,gh2023}, it is helpful to identify and understand the implications of the modeling choices made. Guglielmi and Hairer deal with the class of switching functions whose derivative has compact support, typically with value $-1$ or $1$ outside a small $\varepsilon$-interval around $0$, usually a ramp function, though it might also be a function like $\frac 12 \left[3\left(\frac{x}{\varepsilon}\right)-\left(\frac{x}{\varepsilon}\right)^3\right]$ \cite{gh2015,gh2023}. On the other hand, the gene network literature usually deals with Hill functions, which are strictly increasing on $[0,\infty)$, though a steep Hill function is close to $0$ or $1$ outside a small interval around its threshold. Whether the range of the switching function is $[-1,1]$ or $[0,1]$ is just a matter of scaling, but we will consider a more general class of switching functions, including both ramps and Hill functions (but other sigmoids, like $\tanh \left(\frac{x}{\varepsilon}\right)$ or $\frac{2}{\pi}\tan^{-1} \left(\frac{x}{\varepsilon}\right)$, are also possible), and we will show that the choice can have important consequences. Also, the gene network literature usually describes the fast dynamics in the switching region in terms of the output variable of the switching function, while Guglielmi and Hairer use the input variable (on the $\varepsilon$ scale). This is only a change in description, not an essential change in the model, but in the former case, the variables used to describe the switching dynamics cannot escape from the range of the switching function ($[0,1]$, for example), while in the latter case they can, though they may remain on the $\varepsilon$ scale.
   
The paper is organized as follows. In section \ref{s_framework} we describe our target discontinuous problems and how they can be regularized. In section \ref{s_effects} we direct our attention to the dependence of the solution on the switching function. In section \ref{s_limitcycles} we consider the possibilities that arise from a periodic orbit in the hidden dynamics. In particular, in subsection \ref{ss_steeplimit} we demonstrate the possible sensitivity arising from periodic fast dynamics, which leads to nonuniqueness in the steep-switching limit. In subsection \ref{ss_generalhopf} we give a general condition for the existence of a supercritical Hopf bifurcation in the ambiguous cases in \cite{gh2015}, for which we give two examples in subsection \ref{ss_hopfexamples}, in the first of which the qualitative behavior is the same whether the ramp or the Hill functions are used, while in the second example the nature of the bifurcation depends on the choice of the switching function. Finally, in section \ref{s_conclusions} we give some concluding remarks.
%
%

\section{Framework}
\label{s_framework}
As anticipated in the Introduction, we are interested in the behavior of solutions of discontinuous dynamical systems  close to the intersection of two (or more) discontinuity hypersurfaces. In this section, we briefly describe the approach of asymptotic expansions for singularly perturbed problems and the resulting hidden dynamics, while introducing the relevant notation in the case of two hypersurfaces, which can be straightforwardly extended to the case of three or more.

If $n$ is the dimension of our dynamical systems, we assume that the hypersurfaces 
\begin{equation}\label{hypersurface}
\Sigma_\alpha = \{ y\in \mathbb{R}^n \, ; \, \alpha (y) =0 \}\,, \qquad
\Sigma_\beta = \{ y\in \mathbb{R}^n \, ; \, \beta (y) =0 \} ,
\end{equation}
intersect transversally in $\mathbb{R}^n$. In a neighborhood of
$y\in\Sigma:=\Sigma_{\alpha}\cap\Sigma_{\beta}$, these hypersurfaces divide the phase space into four regions, which we denote by $\mathcal{R}^{++} = \{ y \, ; \, \alpha (y) >0,\, \beta (y) > 0 \}$,
$\mathcal{R}^{+-} = \{ y \, ; \, \alpha (y) >0,\, \beta (y) < 0 \}$, and similarly
$\mathcal{R}^{-+}$ and $\mathcal{R}^{--}$.
The discontinuous ODEs will have the form
\begin{equation}\label{ode}
\dot y = \left\{ \begin{array}{ll} 
f^{++} (y)\,, & y\in \mathcal{R}^{++} \\
f^{+-} (y)\,,~~ & y\in \mathcal{R}^{+-} \\
f^{-+} (y)\,, & y\in \mathcal{R}^{-+} \\
f^{--} (y)\,, & y\in \mathcal{R}^{--}\,, 
\end{array} \right.
\end{equation}
where all four right-hand sides are sufficiently smooth. In the following, a solution that (starts from and) remains in $\mathcal{R}^{++}$ (or $\mathcal{R}^{+-},\mathcal{R}^{-+},\mathcal{R}^{--}$) is called a \emph{classical solution}, while if it reaches either $\Sigma_{\alpha}$ or $\Sigma_{\beta}$ within some finite time, then it turns into a \emph{sliding mode}. Specifically, if only one of the hypersurfaces is reached ($\Sigma_{\alpha}$ with $\beta(y)<0$, without loss of generality), then it is a \emph{codimension-1} sliding mode and, as anticipated, this case can be entirely solved using the Filippov approach, which leads to the differential-algebraic equation
\begin{equation}\label{dae1}
\begin{array}{rcl} \dot y &=& \Bigl(\bigl(1+ \lambda_\alpha \bigr)\, f^{+-}(y) +
 \bigl( 1- \lambda_\alpha \bigr)\, f^{--} (y)\Bigr) \Big/ 2 \\[2mm]
 0 &=& \alpha (y) ,
\end{array}
\end{equation}
where $\lambda_{\alpha}\in[-1,1]$. Such an equation can be solved by first obtaining the expression for $\lambda_{\alpha}=\lambda_{\alpha}(y)$ and then using the classical theory for ODEs after substituting this expression in the first equation.

If the solution then reaches $\Sigma$, it is a codimension-2 sliding mode and the blending approach would give
\begin{equation}\label{dae2}
\begin{array}{rcl} \dot y &=&\Bigl(\bigl(1+ \lambda_\alpha  \bigr)\bigl(1+ \lambda_\beta  \bigr)\, f^{++}(y) +
\bigl(1+ \lambda_\alpha  \bigr)\bigl(1- \lambda_\beta  \bigr)\, f^{+-}(y)\\[1mm]
&&~~+~\bigl(1-\lambda_\alpha  \bigr)\bigl(1+ \lambda_\beta  \bigr)\, f^{-+}(y) +
\bigl(1- \lambda_\alpha  \bigr)\bigl(1- \lambda_\beta  \bigr)\, f^{--}(y)\Bigr) \Big/ 4\\[2mm]
 0 &=& \alpha (y) \\[2mm]
 0 &=& \beta (y) .
\end{array}
\end{equation}
However, in this case, $\lambda_{\alpha}(y)$ and $\lambda_{\beta}(y)$ are quadratic curves that can have 0, 1 or 2 intersections in $[-1,1]\times[-1,1]$, implying that different (sliding or classical) solutions can coexist.

With the aim of solving the resulting apparent ambiguity, we consider a \emph{regularization} of the vector field, defined by a scalar \emph{switching function} $\pi$ as a convex combination of the vector fields:
\begin{equation}\label{regularization}
\begin{array}{rcl} \dot y &=&\Bigl(\bigl(1+ \pi (u)\bigr)\bigl(1+ \pi (v)\bigr)\, f^{++}(y) +
\bigl(1+ \pi (u)\bigr)\bigl(1- \pi (v)\bigr)\, f^{+-}(y)\\[1mm]
&&+\bigl(1-\pi (u)\bigr)\bigl(1+ \pi (v)\bigr)\, f^{-+}(y) +
\bigl(1- \pi (u)\bigr)\bigl(1- \pi (v)\bigr)\, f^{--}(y)\Bigr) \Big/ 4\,.
\end{array}
\end{equation}
where $u=\alpha (y)/\varepsilon$, $v=\beta (y)/\varepsilon$, and $\varepsilon$ is the regularization parameter. The simplest class of switching functions among those that are constant outside a narrow interval is the ramp:
\begin{equation}\label{eq:ramp} r(u)=\left\{\begin{array}{ll} -1\quad &\mbox{if } u\leq-1 \\
u\quad &\mbox{if } -1<u<1 \\
1\quad &\mbox{if } u>1\,.
\end{array}\right. \end{equation}
In order to use \eqref{regularization} to determine the equations describing the hidden dynamics, we resort to the asymptotic expansion of the corresponding solution. The key idea is to consider, in addition to the \emph{slow} time $t$, a \emph{fast} time $\tau:=t/\varepsilon$ to describe the behavior of the solution in the region $\{ y \, ; \, |\alpha (y)|\le \varepsilon ,\, |\beta (y) |\le \varepsilon \}$. If such a region is entered at time $t_0(\varepsilon)$, we can assume without loss of generality that $\alpha(y(t_0(\varepsilon)))=-\varepsilon$, which, in turn, together with the implicit function theorem, allows us to write $y\bigl( t_0(\varepsilon )\bigr) = y_\alpha + \varepsilon a_1 + \mathcal{O} (\varepsilon^2)$ for some constants $y_\alpha,a_1$. We make the assumption
\begin{equation}\label{asymptot}
y\bigl( t_0(\varepsilon ) + t\bigr) = y_0 (t) + \varepsilon \bigl( y_1(t) + \eta_0 (t/\varepsilon)\bigr)
+ \mathcal{O} (\varepsilon^2 ) ,
\end{equation}
which gives $y_0(0)=y_{\alpha}$ and $y_1(0)+\eta_0(0)=a_1$, after substituting $t=0$. Moreover, from \eqref{asymptot} we can write
\begin{equation}\label{alpheps}
\frac 1\varepsilon \, \alpha \bigl( y(t_0(\varepsilon )+t)\bigr) = 
\frac 1\varepsilon \, \alpha \bigl( y_0(t)\bigr) + \alpha ' \bigl( y_0(t)\bigr)
\bigl( y_1(t) + \eta_0 (\tau )\bigr) + \mathcal{O} (\varepsilon ) 
\end{equation}
and observe that, in order to avoid the singularity for $\varepsilon\to 0$, the function $y_0$ must satisfy $\alpha(y_0(t))=0$. Similarly, we also get $\beta(y_0(t))=0$. Differentiating \eqref{asymptot} leads to the equality $\dot y\bigl( t_0(\varepsilon ) + t\bigr) = y_0 (t) + \varepsilon  \dot y_1(t) + \eta'_0 (t/\varepsilon)$. By inserting it into \eqref{regularization}, replacing $t\to\varepsilon \tau$, and setting $\varepsilon =0$, we get
\begin{equation}\label{dgleta0}
\begin{array}{rcl} \eta_0 '(\tau ) &\!\!=\!\!&  \dot y(t_0(0)) - \dot y_0 (0)\\[1mm]
&\!\!=\!\!&
\Bigl(\bigl(1+ \pi \bigl(u (\tau )\bigr)\bigr)\bigl(1+ \pi \bigl(v (\tau )\bigr)\bigr)\, f^{++}(y_0) +
\bigl(1+ \pi \bigl(u (\tau )\bigr)\bigr)\bigl(1- \pi \bigl(v (\tau )\bigr)\bigr)\, f^{+-}(y_0)\\[1mm]
&\!\!+\!\!&\bigl(1-\pi \bigl(u (\tau )\bigr)\bigr)\bigl(1+ \pi \bigl(v (\tau )\bigr)\bigr)\, f^{-+}(y_0) +
\bigl(1- \pi \bigl(u (\tau )\bigr)\bigr)\bigl(1- \pi \bigl(v (\tau )\bigr)\bigr)\, f^{--}(y_0)
\Bigr) \Big/ 4\\[2mm]
 &\!\!-\!\!& \dot y_0 (0).
\end{array}
\end{equation}
Replacing $t\to\varepsilon \tau$ and setting $\varepsilon =0$ in \eqref{alpheps}, we can write $u (\tau )= \alpha ' (y_0 ) \bigl( y_1(0) + \eta_0 (\tau )\bigr)$
and $v (\tau )= \beta ' (y_0 ) \bigl( y_1(0) + \eta_0 (\tau )\bigr)$, which are indeed the functions we use to describe the hidden dynamics. The relevant differential equations are obtained by multiplying \eqref{dgleta0} first by $\alpha'(y_0)$ and then by $\beta'(y_0)$, while recalling that $\alpha '(y_0 ) \dot y_0 (0) =\beta '(y_0 ) \dot y_0 (0) =0$:
\begin{equation}\label{dynsyst}
\begin{array}{rcl}
{ u'} &\!\!=\!\!& \Bigl(\bigl(1+ \pi (u)\bigr)\bigl(1+ \pi (v)\bigr)\, f_\alpha^{++} +
\bigl(1+ \pi (u)\bigr)\bigl(1- \pi (v)\bigr)\, f_\alpha^{+-}\\[1mm]
&&~+~\bigl(1-\pi (u)\bigr)\bigl(1+ \pi (v)\bigr)\, f_\alpha^{-+} +
\bigl(1- \pi (u)\bigr)\bigl(1- \pi (v)\bigr)\, f_\alpha^{--}\Bigr) \Big/ 4\\[2mm]
{ v'} &\!\!=\!\!& \Bigl(\bigl(1+ \pi (u)\bigr)\bigl(1+ \pi (v)\bigr)\, f_\beta^{++} +
\bigl(1+ \pi (u)\bigr)\bigl(1- \pi (v)\bigr)\, f_\beta^{+-}\\[1mm]
&&~+~\bigl(1-\pi (u)\bigr)\bigl(1+ \pi (v)\bigr)\, f_\beta^{-+} +
\bigl(1- \pi (u)\bigr)\bigl(1- \pi (v)\bigr)\, f_\beta^{--}\Bigr) \Big/ 4 
\end{array}
\end{equation}
for $f_\alpha^{++} = \alpha ' (y_0 ) f^{++}(y_0)$,$\,\ldots$, $f_\beta^{--} = \beta ' (y_0 ) f^{--}(y_0)$.

In the following we assume that $\alpha(y)=y^{(1)}-\theta_1$ and $\beta(y)=y^{(2)}-\theta_2$ for some constant $\theta_1,\theta_2$, where $y^{(1)}$ and $y^{(2)}$ are the components of the vector $y$. In this case, we have $y_0=(\theta_1,\theta_2)^\intercal$ and $\alpha'(y_0)=\beta'(y_0)=1$, so that $f_\alpha^{++} = f^{++}(y_0)$,$\,\ldots$, $f_\beta^{--} = f^{--}(y_0)$.

Such an assumption is anyway typical in applications, such as in the context of gene network modeling. There, the switching function is often taken to be a Hill function~\cite{pk2005}
\begin{equation}\label{eq:hill}
Z(x,\theta,\varepsilon)=\frac{x^{1/\varepsilon}}{\theta^{1/\varepsilon}+x^{1/\varepsilon}},
\end{equation} with $\varepsilon\in (0,1]$ as the perturbation parameter\footnote{The notation originally introduced in \cite{pk2005} used the letter $q$ to indicate the regularization parameter, and so did subsequent work on the subject of gene regulatory networks. However, we decide to use $\varepsilon$ for consistency throughout the paper, as that is the letter normally used in the context of singular perturbation theory.}. This differs from the class of switching functions represented by the ramp, in that it is not constant outside an interval. Differentiating the Hill function gives 
$\frac{dZ}{dx}=\frac{Z(1-Z)}{\varepsilon x}$
so that $\frac{dZ}{dt}=\frac{Z(1-Z)}{\varepsilon x}\dot{x}$, or in the fast time scale, with $t=\tau \varepsilon$, this becomes 
\[\frac{dZ}{d\tau}=\frac{Z(1-Z)}{x}\dot{x}\,.\]
If we were to define $Z=r(u(x))$ as in \eqref{eq:ramp}, we would have $\frac{dZ}{dx}=\frac{1}{2\varepsilon}$ on $x\in (\theta-\varepsilon, \theta+\varepsilon)$, and $0$ elsewhere, so that $\frac{dZ}{dt}=\frac{1}{2\varepsilon}\dot{x}$ and changing to fast time via $t=2\varepsilon \tau$, $\frac{dZ}{d\tau}=\dot{x}$. Thus, the extra factor $\frac{Z(1-Z)}{x}$ that appears in the $Z$ equations when we use Hill functions comes from the nonlinearity of the Hill function.

\section{Effects of the choice of switching function}
\label{s_effects}
In this section, we show how crucial the choice of a switching function can be to the stability properties of the  equilibria in a codimension-2 region.

Given a system in $\RR^2$, for example, 
\begin{eqnarray*}
 \dot{x}_1 &=& f(r(u),r(v)) = f(\rho,\sigma) \\
 \dot{x}_2 &=& g(r(u),r(v)) = g(\rho,\sigma)
\end{eqnarray*}
and a fixed point at $(\rho^*,\sigma^*)$,
the Jacobian matrix (taking derivatives with respect to $\rho$ and $\sigma$) is 
\[ J=\left[ \begin{array}{cc} \partial_{\rho} f & \partial_{\sigma} f \\
\partial_{\rho} g & \partial_{\sigma} g \end{array}\right],\] 
and this is also the Jacobian with respect to $u$ and $v$ when $\rho=r(u)=u$ and $\sigma=r(v)=v$. 

Now let $\pi$ be some function other than $r$ that is monotonic increasing and smooth ($C^1$) on $[-1,1]$. The fixed points are at $u^*=\pi^{-1}(\rho^*)$, $v^*=\pi^{-1}(\sigma^*)$, which are well-defined because of the monotonicity and continuity of $\pi$. The new Jacobian with respect to $u$ and $v$ is 
\[ J_{\pi}=\left[ \begin{array}{cc} \pi'(u) \partial_{\rho} f & \pi'(v) \partial_{\sigma} f \\
\pi'(u) \partial_{\rho} g & \pi'(v) \partial_{\sigma} g \end{array}\right]
=\left[ \begin{array}{cc} \partial_{\rho} f & \partial_{\sigma} f \\
\partial_{\rho} g & \partial_{\sigma} g \end{array}\right]\left[ \begin{array}{cc} \pi'(u) & 0 \\
0 & \pi'(v) \end{array}\right]=JD.  \] 
Then $\text{det}(J_{\pi})=(\text{det}(J))(\text{det}(D))$ and of course $\text{det}(D)>0$, which gives $\text{sign}(\text{det}(J))=\text{sign}(\text{det}(J_{\pi}))$, while $\text{Tr}(J_{\pi})=\pi'(u)\partial_{\rho} f+\pi'(v)\partial_{\sigma} g$. Thus, properties of the dynamics, for example, the stability of a fixed point, can depend on the choice of switching function through the trace of the Jacobian. In particular, $\text{sign}(\text{Tr}(J_{\pi}))\ne\text{sign}(\text{Tr}(J))$ if $\text{Tr}(J_{\pi})\text{Tr}(J)<0$, {\em i.e.}, when 

\begin{equation} [\pi'(u) \partial_{\rho} f +\pi'(v) \partial_{\sigma} g][\partial_{\rho} f + \partial_{\sigma} g]<0\,.\label{eq:tracechange}
\end{equation}

Note that the $Z_i$ variables in the Hill function formulation could also be re-scaled to the interval $[-1,1]$ instead of $[0,1]$, letting 
\begin{equation}
Z_1=\frac{u+1}{2}\quad\text{and}\quad Z_2=\frac{v+1}{2}\,,
\label{eq:substitutions}
\end{equation}
to get
\begin{eqnarray*} u' &=& (1-u^2)f(u,v) \\
v' &=& (1-v^2)g(u,v)\,,
\end{eqnarray*}
where $f(u,v)$ is $\dot{x}_1$ and $g(u,v)$ is $\dot{x}_2$ with the substitutions~\eqref{eq:substitutions}.

\subsection{Dependence of behavior on the switching function: an illustrative example}

In this section, we consider an example where a fixed point in the codimension-2 region is stable with the Hill function, but unstable with the ramp function.

Let
\begin{eqnarray*}
\dot{x}_1&=&(1-s^+(x_1,\theta_1)+s^+(x_1,\theta_1)s^+(x_2,\theta_2))k_1-\gamma_1 x_1 \\
\dot{x}_2&=&(1-s^+(x_1,\theta_1)-s^+(x_2,\theta_2)+2s^+(x_1,\theta_1)s^+(x_2,\theta_2))k_2 - \gamma_2 x_2\,,
\end{eqnarray*}
where $s^+(x,\theta)=\left\{ \begin{array}{ll} 0 & \text{ on }x<\theta \\ 1 & \text{ on } x>\theta\end{array}\right. \,.$
The dynamics in the codimension-2 region using Hill functions defined as \eqref{eq:hill} where $\lim_{\varepsilon\to 0}Z(x,\theta,\varepsilon)= s^+(x,\theta)$, after changing variables and time scales, are
\begin{eqnarray*}
Z'_1&=&\frac{Z_1(1-Z_1)}{\theta_1}[(1-Z_1+Z_1Z_2)k_1-\gamma_1\theta_1] \\
Z'_2&=&\frac{Z_2(1-Z_2)}{\theta_2}[(1-Z_1-Z_2+2Z_1Z_2)k_2 - \gamma_2\theta_2]\,,
\end{eqnarray*}
but with the ramp functions, using the variables $u=\frac{x_1-\theta_1}{\varepsilon}, v=\frac{x_2-\theta_2}{\varepsilon}$, and $\pi=r$ as in \eqref{eq:ramp} so that $\lim_{\varepsilon\to 0} \frac 12 (1+\pi(u(x_1)))=s^+(x_1,\theta_1)$, and similarly for $\pi(v)$, we get
\begin{eqnarray*}
u'&=&\frac{1}{4}(uv-u+v+3)k_1-\gamma_1\theta_1 \\
v'&=&\frac{1}{2}(uv+1)k_2 - \gamma_2\theta_2
\end{eqnarray*}
in $[-1,1]^2$.

With $\phi_1=\frac{\gamma_1\theta_1}{k_1}=\frac 12$ and $\phi_2=\frac{\gamma_2\theta_2}{k_2}=\frac{7}{16}$, the fixed point in the codimension-2 region is $(Z_1^*,Z_2^*)$, or rescaled for $u,v$ variables, $(u^*,v^*)=(2Z_1^*-1,2Z_2^*-1)$ independent of the regularization, apart from the scaling, with $Z_1^*=\frac{23-\sqrt{17}}{32}$ and $Z_2^*=\frac{9-\sqrt{17}}{32}$, or approximately $(Z_1^*,Z_2^*)=(0.5899,0.4100)$. With $k_1=1, k_2=5$, the fixed point is unstable for the ramp regularization, since the Jacobian matrix for the ramp regularization reads
\[
J_r(u,v)=\frac{1}{4}\cdot\begin{pmatrix}
v-1 & u+1 \\
10v & 10u 
\end{pmatrix},
\]
so that $ \text{det}(J_r(u^*,v^*)) = -\frac{5}{2}(u^*+v^*)= \frac{5\sqrt{17}}{16}>0$ and $ \text{tr}(J_r(u^*,v^*)) = v^*-1+10u^*= \frac{47-11\sqrt{17}}{16}>0$. Indeed, as shown by numerical simulations, solutions entering the codimension-2 region on the black wall at $(u,v)=(1-2\phi_1,-1)$ exit into the classical solution in $(u,v)=(1,1)$, the positive quadrant. 
However, the fixed point is stable for the Hill function regularization, since the  Jacobian matrix at the fixed point in this case when, for instance, $\theta_1=\theta_2=1$, is given by
\[
J_H(Z_1^*,Z_2^*)=\begin{pmatrix}
Z_1^*(1-Z_1^*)(Z_2^*-1)& Z_1^*(1-Z_1^*)\cdot Z_1^* \\
5Z_2^*(1-Z_2^*)(2Z_2^*-1) & 5Z_2^*(1-Z_2^*)(2Z_1^*-1)
\end{pmatrix},
\]
so that $ \text{det}(J_H(Z_1^*,Z_2^*))=\frac{Z_1^*Z_2^*(1-Z_1^*)(1-Z_2^*)}{2}\text{det}(J_r(u^*,v^*))>0$ and
\begin{eqnarray*}
\text{tr}(J_H(Z_1^*,Z_2^*)) &=& (1-Z_2^*)(-Z_1^*(1-Z_1^*)+5Z_2^*(2Z_1^*-1))\\
&=& \frac{173-53\sqrt{17}}{512}<0\,.
\end{eqnarray*}
Indeed, numerical simulations show that codimension-2 solutions entering the co-dimension 2 region on the black wall at $(Z_1,Z_2)=(1-\phi_1,0)$ approach $(Z_1^*,Z_2^*)$.
\subsection{Singular perturbation using an arbitrary monotonic switching function}
Recall the condition \eqref{eq:tracechange}, which is necessary for stability change. Consider now a class of monotonic functions, ${\cal S}$, such that $\pi\in{\cal S}$ satisfies these four properties:
\begin{equation}\label{eq:S}\left\{\begin{array}{lr}
\pi(u) &\text{is odd}\\
\pi'(u)>0& \text{on } (-1,1)\\
\pi''(u)\le 0 &\text{on } (0,1)\\
\pi(u)=1 &\text{on } [1,\infty)\,.
\end{array}
\right.
\end{equation}
\begin{proposition}
The stability of a fixed point $(\rho^*,\sigma^*)\in(-1,1)^2$ in codimension-2 when using the ramp function \eqref{eq:ramp} for both variables is different from the one obtained when using $\pi$ if

\begin{equation}
\begin{array}{rcl}
\displaystyle
\left. \frac{\partial_\rho f}{\partial_\sigma g}\right|_{(\rho^*,\sigma^*)} &\in& 
\displaystyle
\left( -1,-\frac{\pi'(v^*)}{\pi'(u^*)} \right)\text{ if } \pi'(v^*)<\pi'(u^*)\,,\\[5mm]
\displaystyle 
\left. \frac{\partial_\rho f}{\partial_\sigma g}\right|_{(\rho^*,\sigma^*)} &\in& 
\displaystyle
\left( -\frac{\pi'(v^*)}{\pi'(u^*)},-1 \right)\text{ if } \pi'(v^*)>\pi'(u^*)\,, 
\end{array}
\label{eq:stabilitychange}
\end{equation}
but not outside $[-1,1]^2$ (and we say nothing about the endpoints).
\end{proposition}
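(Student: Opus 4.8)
The plan is to reduce the assertion to the trace-sign condition \eqref{eq:tracechange} established just above the proposition, and then to recast that condition as the interval constraint \eqref{eq:stabilitychange} on the ratio $\partial_\rho f/\partial_\sigma g$. First I would record the elementary facts about $\pi\in\mathcal S$ that make the comparison legitimate: by \eqref{eq:S} (together with the earlier assumption that $\pi$ is $C^1$ on $[-1,1]$), $\pi$ is odd, continuous and strictly increasing on $(-1,1)$ with $\pi(1)=1$, hence $\pi(-1)=-1$, so $\pi$ maps $(-1,1)$ bijectively onto itself; consequently $u^*=\pi^{-1}(\rho^*)$ and $v^*=\pi^{-1}(\sigma^*)$ lie in $(-1,1)$ and $\pi'(u^*),\pi'(v^*)>0$, while for the ramp $r'(u^*)=r'(v^*)=1$ and $J_r=J$. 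If instead the fixed point were on the boundary of, or outside, $[-1,1]^2$, one of these derivatives could vanish or be discontinuous (for the ramp, $r'$ jumps from $1$ to $0$ at $\pm1$), the matrix $D=\diag(\pi'(u^*),\pi'(v^*))$ would be singular, and the comparison would degenerate; this is why nothing is claimed there.

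Next I would use the identity $J_\pi=JD$ from the discussion preceding the proposition: since $\det D=\pi'(u^*)\pi'(v^*)>0$, $\det J_\pi$ and $\det J$ share the same sign, so for a hyperbolic fixed point lying in the regime $\det J>0$ (the only one in which ``stable'' and ``unstable'' are distinguished through the trace --- if $\det J<0$ both $J$ and $J_\pi$ give a saddle) a change of stability is equivalent to a change of sign of the trace, i.e.\ to $(\mathrm{Tr}\,J_\pi)(\mathrm{Tr}\,J)<0$, which is precisely \eqref{eq:tracechange}. I would make this $\det J>0$ proviso explicit, since \eqref{eq:stabilitychange} does not by itself force it.

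The core computation is then a one-variable sign analysis. Put $a=\partial_\rho f|_{(\rho^*,\sigma^*)}$, $b=\partial_\sigma g|_{(\rho^*,\sigma^*)}$, $p=\pi'(u^*)>0$, $q=\pi'(v^*)>0$, so that the left side of \eqref{eq:tracechange} is $(pa+qb)(a+b)$. If $b\neq0$, dividing by $b^2>0$ and writing $\xi=a/b$ turns the condition into $(p\xi+q)(\xi+1)<0$; this is an upward-opening parabola in $\xi$ with roots $-1$ and $-q/p=-\pi'(v^*)/\pi'(u^*)$, hence negative exactly for $\xi$ strictly between those two roots. Distinguishing $q<p$ from $q>p$ orders the roots and yields precisely the two alternatives of \eqref{eq:stabilitychange}; when $q=p$ the roots coincide, the parabola is nonnegative, no stability change occurs, consistently with \eqref{eq:stabilitychange} describing an empty interval in that case. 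The degenerate case $b=0$ is immediate: \eqref{eq:tracechange} then reads $pa^2<0$, which never holds, so there is no change --- matching the ratio being infinite and thus outside the prescribed interval.

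The only real obstacle is bookkeeping rather than depth: one must track $\sgn b$ and dispose of $b=0$, get the orientation of the interval right according to $\sgn(\pi'(u^*)-\pi'(v^*))$, and be careful that \eqref{eq:stabilitychange} produces a genuine change of stability only in conjunction with $\det J>0$ (inherited then by $J_\pi$); the endpoints $u^*=\pm1$, $v^*=\pm1$ are excluded exactly because $\pi'$ --- and $r'$ --- may fail there to be positive or even continuous.
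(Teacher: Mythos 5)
Your argument is correct and follows essentially the same route as the paper: reduce the claim to the trace-sign condition \eqref{eq:tracechange}, divide by $(\partial_\sigma g)^2$, and observe that the resulting upward-opening quadratic in $\partial_\rho f/\partial_\sigma g$ has roots $-1$ and $-\pi'(v^*)/\pi'(u^*)$, so it is negative exactly between them. Your additional bookkeeping --- disposing of $\partial_\sigma g=0$, noting that a genuine change of \emph{stability} (rather than merely of trace sign) also requires $\det J>0$, and explaining why the boundary of $[-1,1]^2$ is excluded --- is more careful than the paper's own proof, which leaves these points implicit.
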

\begin{proof}
This comes from ~\eqref{eq:tracechange} above, as follows. Multiply the two terms in ~\eqref{eq:tracechange} and divide by $(\partial_\sigma g)^2$ to get 
\[\pi'(u)\left( \frac{\partial_\rho f}{\partial_\sigma g}\right)^2+(\pi'(u)+\pi'(v))\left( \frac{\partial_\rho f}{\partial_\sigma g}\right)+\pi'(v)<0\,,\]
which is of the form $ax^2+(a+c)x+c<0$, with $a>0, c>0$. The two roots of this quadratic are $x=-1$ and $-\frac ca$. Thus, $x\in \text{range}\left\{-1,-\frac ca\right\}$, the order of these two terms depending on the magnitude of $\frac ca$. 
\end{proof}

\begin{proposition}
Given $f$ and $g$, and fixed point $(\rho^*,\sigma^*)$, the condition in~\eqref{eq:stabilitychange} can be achieved for some function $\pi\in{\cal S}$ if and only if the following conditions are satisfied:
\begin{enumerate}
\item $\text{det}(J(\rho^*,\sigma^*))>0$, and
\item $\left. \frac{\partial_\rho f}{\partial_\sigma g}\right|_{(\rho^*,\sigma^*)}  <0$, and either 
\item\begin{enumerate}\item
$  \left. \frac{\partial_\rho f}{\partial_\sigma g}\right|_{(\rho^*,\sigma^*)} < -1 \text{ and } |\rho^*|>|\sigma^*| $\,, or
\item  $\left. \frac{\partial_\rho f}{\partial_\sigma g}\right|_{(\rho^*,\sigma^*)} > -1 \text{ and }  |\rho^*|<|\sigma^*|\,. $
\end{enumerate}
\end{enumerate}
\end{proposition}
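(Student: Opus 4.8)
The plan is to reformulate the statement in terms of the single positive quantity $R(\pi) := \pi'(v^*)/\pi'(u^*)$, where $u^* = \pi^{-1}(\rho^*)$ and $v^* = \pi^{-1}(\sigma^*)$, and then to use the quadratic computation already performed in the preceding proposition. First I would record the facts forced by $\pi \in \mathcal{S}$: since $\pi$ is odd, $\pi(0) = 0$ and $\pi'$ is even; since $\pi \equiv 1$ on $[1,\infty)$ and $\pi' > 0$ on $(-1,1)$, $\pi$ is a strictly increasing continuous bijection of $(-1,1)$ onto itself, so $u^*,v^*$ are well defined with $|u^*| = \pi^{-1}(|\rho^*|)$ and $|v^*| = \pi^{-1}(|\sigma^*|)$; and since $\pi'' \le 0$ on $(0,1)$, $\pi'$ is nonincreasing on $[0,1)$. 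Combining the last two, $|\rho^*| > |\sigma^*|$ implies $|u^*| > |v^*|$ and hence $\pi'(u^*) \le \pi'(v^*)$, i.e.\ $R(\pi) \ge 1$; symmetrically $|\rho^*| < |\sigma^*|$ gives $R(\pi) \le 1$, and $|\rho^*| = |\sigma^*|$ gives $R(\pi) = 1$. By the preceding proposition, for a given $\pi$ the condition \eqref{eq:stabilitychange} says precisely that $\left.\partial_\rho f / \partial_\sigma g\right|_{(\rho^*,\sigma^*)}$ lies strictly between $-1$ and $-R(\pi)$.

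Necessity is then quick. Suppose \eqref{eq:stabilitychange} holds for some $\pi \in \mathcal{S}$. Then $\partial_\rho f/\partial_\sigma g$ is trapped strictly between the two negative numbers $-1$ and $-R(\pi)$, so it is negative (condition 2), and the interval is nonempty, so $R(\pi) \ne 1$, which by the previous paragraph forces $|\rho^*| \ne |\sigma^*|$. If $|\rho^*| > |\sigma^*|$ then $R(\pi) \ge 1$ together with $R(\pi) \ne 1$ gives $R(\pi) > 1$, the interval is $(-R(\pi),-1)$, and so $\partial_\rho f/\partial_\sigma g < -1$: this is case 3(a). If $|\rho^*| < |\sigma^*|$ then $R(\pi) < 1$, the interval is $(-1,-R(\pi))$, and so $\partial_\rho f/\partial_\sigma g > -1$: this is case 3(b). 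For condition 1, recall from the derivation of $J_\pi = JD$ preceding the previous proposition that $\det J_\pi = (\det J)(\det D)$ with $\det D = \pi'(u^*)\pi'(v^*) > 0$; hence $\det J_\pi$ always has the sign of $\det J$, so if $\det J \le 0$ the fixed point is a saddle or non-hyperbolic for every admissible $\pi$ and its asymptotic stability cannot change. (In other words, reading ``\eqref{eq:stabilitychange} can be achieved'' as ``an actual change of stability can be produced'', one needs $\det J > 0$, and then — $\det J_\pi > 0$ too — stability is governed by the trace alone, so \eqref{eq:stabilitychange} $\Leftrightarrow$ \eqref{eq:tracechange} $\Leftrightarrow$ a change of stability.)

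For sufficiency the key point is that the admissible range of $R(\pi)$ is unbounded in the relevant direction: $\sup_{\pi \in \mathcal{S}} R(\pi) = +\infty$ when $|\rho^*| > |\sigma^*|$, and $\inf_{\pi \in \mathcal{S}} R(\pi) = 0$ when $|\rho^*| < |\sigma^*|$. I would prove the first by exhibiting, for small $\delta > 0$, the function that is odd, equals $\pm 1$ outside $[-1,1]$, and on $[0,1]$ is piecewise linear with breakpoint $\delta$, slope $s_1 = c/\delta$ on $[0,\delta]$ and slope $s_2 = (1-c)/(1-\delta)$ on $[\delta,1]$, where $c := (|\rho^*| + |\sigma^*|)/2 \in (|\sigma^*|,|\rho^*|) \subset (0,1)$. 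Then $\pi(\delta) = c$ and $\pi(1) = c + s_2(1-\delta) = 1$, and $s_1 > s_2$ once $\delta$ is small; a routine $C^1$ rounding of the single corner at $\delta$, carried out in a window containing neither $|u^*| = \pi^{-1}(|\rho^*|)$ nor $|v^*| = \pi^{-1}(|\sigma^*|)$ and leaving $\pi(1) = 1$, keeps $\pi'$ positive and nonincreasing and so yields a $\pi \in \mathcal{S}$. Since $|v^*| \in (0,\delta)$ and $|u^*| \in (\delta,1)$ fall in the linear pieces, $\pi'(v^*) = s_1$ and $\pi'(u^*) = s_2$ (by evenness of $\pi'$), whence $R(\pi) = s_1/s_2 = c(1-\delta)/[\delta(1-c)] \to +\infty$ as $\delta \to 0^+$. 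The case $|\rho^*| < |\sigma^*|$ is the mirror image, with $c \in (|\rho^*|,|\sigma^*|)$, giving $R(\pi) = \delta(1-c)/[c(1-\delta)] \to 0$.

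Granting this, sufficiency follows: under conditions 1–3 in case 3(a) (so $|\rho^*| > |\sigma^*|$ and $\partial_\rho f/\partial_\sigma g < -1$) choose $\pi$ with $R(\pi) > -\partial_\rho f/\partial_\sigma g$; then $R(\pi) > 1$, so we are in the branch $\pi'(v^*) > \pi'(u^*)$ of the previous proposition, and $\partial_\rho f/\partial_\sigma g \in (-R(\pi),-1)$, i.e.\ \eqref{eq:stabilitychange} holds, which by condition 1 is a genuine change of stability; case 3(b) is symmetric, choosing $\pi$ with $0 < R(\pi) < -\partial_\rho f/\partial_\sigma g < 1$. The one step I expect to require genuine care is verifying that the concavity constraint $\pi'' \le 0$ on $(0,1)$ does not cap $R(\pi)$: it forces only a lower bound on $\pi'(u^*)$ — since on $[\pi^{-1}(|\rho^*|),1]$ the function must still rise by the fixed amount $1 - |\rho^*|$ with nonincreasing slope, $\pi'(u^*) \ge (1 - |\rho^*|)/(1 - \pi^{-1}(|\rho^*|)) \ge 1 - |\rho^*|$ — while placing no ceiling on $\pi'(v^*) = \pi'(\pi^{-1}(|\sigma^*|))$ near the origin, which is exactly the freedom the construction above exploits.
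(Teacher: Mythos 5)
Your proof is correct and follows essentially the same route as the paper: both reduce the problem to showing that, under the evenness and concavity constraints of $\mathcal{S}$, the ratio $\pi'(v^*)/\pi'(u^*)$ is pinned on one side of $1$ by the sign of $|\rho^*|-|\sigma^*|$ but can be pushed arbitrarily far from $1$ in that direction, which then does or does not capture $-\partial_\rho f/\partial_\sigma g$ according to conditions 1--3. The one place you go beyond the paper is sufficiency: where the paper argues via a chain of secant-slope inequalities for $\eta=\pi^{-1}$ and then asserts that the ratio attains all of $[1,\infty)$ (respectively $(0,1]$), your explicit two-slope piecewise-linear construction with a $C^1$ concave rounding at the breakpoint exhibits the required $\pi$ directly, which is a slightly more rigorous way of establishing the same claim.
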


\begin{proof}
The first condition must be satisfied, since otherwise the fixed point is always a saddle point, and its stability cannot be changed by modifying the trace. The second condition must be satisfied, because otherwise the sign of $\text{Tr}(J)$ cannot be changed when $J$ is left multiplied by $D$.

Given the first two conditions, we now show that the third is necessary and sufficient.
By \eqref{eq:S}, $\pi'(-x)=\pi'(x)$ for all $x$, and recall that $\rho^*=\pi(u^*)$ and $\sigma^*=\pi(v^*)$.

Suppose first that $|\rho^*|>|\sigma^*|$. Then $\pi'(u^*)\le\pi'(v^*)$, or, letting $\eta=\pi^{-1}$, $\eta'(\rho^*)\ge\eta'(\sigma^*)$. The fact that $\pi''\le 0$ on $(0,1)$ ($\pi$ is concave down or at least flat), and therefore $\rho''\ge 0$ on $(0,1)$, leads to the following inequalities:  
\[ \frac{\eta(|\sigma^*|)}{|\sigma^*|} \le \eta'(\sigma^*) \le \frac{\eta(|\rho^*|)-\eta(|\sigma^*|)}{|\rho^*|-|\sigma^*|}\le \eta'(\rho^*)\le\frac{1-\eta(|\rho^*|)}{1-|\rho^*|}\,.\]
Thus, 
\[ 1\ge \frac{\eta'(\sigma^*)}{\eta'(\rho^*)}\ge \frac{\eta(|\sigma^*|)}{|\sigma^*|}\frac{(1-|\rho^*|)}{(1-\eta(|\rho^*|)}\,. \]
The last term on the right $\to 0$ as $\rho(|\sigma^*|)\to 0$ (for fixed $\sigma^*$, of course). It is always possible to choose a $\pi$ (and therefore $\eta$) to make $\eta(|\sigma^*|)$ as close to $0$ as we like. 

Then, since $\pi'(v^*)=\frac{1}{\eta'(\sigma^*)}$ and $\pi'(u^*)=\frac{1}{\eta'(\rho^*)}$, we have 
\begin{equation} 
\frac{\pi'(v^*)}{\pi'(u^*)}=\frac{\eta'(\rho^*)}{\eta'(\sigma^*)}\,,
\label{eq:rhoratio}
\end{equation}
which can then take any values in $[1,\infty)$ by appropriate choice of $\pi\in{\cal S}$.

Then if (and only if) $\left. \frac{\partial_\rho f}{\partial_\sigma g}\right|_{(\rho^*,\sigma^*)}<-1$, we can choose $\pi$ so that it falls in the range in~\eqref{eq:stabilitychange}.

Now suppose that $|\rho^*|<|\sigma^*|$. The argument above applies with the roles of $\rho$ and $\sigma$ reversed, so
\[ 1\ge \frac{\eta'(\rho^*)}{\eta'(\sigma^*)}\ge \frac{\eta(|\rho^*|)}{|\rho^*|}\frac{(1-|\sigma^*|)}{(1-\eta(|\sigma^*|)} \]
and again, the last term on the right $\to 0$ as $\eta(|\rho^*|)\to 0$ (for fixed $\rho^*$), and any value above $0$ is achievable by appropriate choice of $\pi\in{\cal S}$.
Thus, the ratio in Equation~\eqref{eq:rhoratio} can take any values in $(0,1]$.

Then if (and only if) $\left. \frac{\partial_\rho f}{\partial_\sigma g}\right|_{(\rho^*,\sigma^*)}>-1$, we can choose $\pi$ so that it falls in the range in~\eqref{eq:stabilitychange}.
\end{proof}

\begin{remark}\label{kepsilon}
When the fast equations define variables which in fact evolve at (slightly) different rates, it is possible to pick correspondingly two switching functions which share the same shape but differ in steepness. For instance, the switching $u$ variable can be a ramp function with slope $\frac{1}{\varepsilon}$ while that for $v$ has slope $\frac{\kappa}{\varepsilon}$. In the hidden dynamics, this would correspond to multiplying the right-hand side. of $v'$ by $\kappa$, which, as it will be made clearer by the examples shown in the next sections, plays an important role in the resulting dynamical analysis. Similarly, in the case of Hill functions, it is possible to use a different time scale for either of the variables by defining $Z(x_i,\theta_i,\varepsilon)=\frac{x_i^{\kappa/\varepsilon}}{\theta_i^{\kappa/\varepsilon}+x_i^{\kappa/\varepsilon}}$, obtaining thus $\frac{dZ_i}{d\tau}=\kappa\frac{Z_i(1-Z_i)}{x_i}\dot x_i$, which again corresponds to multiplying the right-hand side by $\kappa$. We note that in \cite{pk2005} it was already pointed out that when different variables $x_i$ have different values $\varepsilon_i$ (e.g., one has $\varepsilon_i$, another has $\kappa \varepsilon_i$), the dynamics may be different in comparison to uniform $\varepsilon_i$'s even in the limit as all $\varepsilon_i\to \infty$.
\end{remark}
\section{Limit cycles in a codimension-2 region} 
\label{s_limitcycles}
In this section, we consider existence of Hopf bifurcations and stable periodic orbits in co-dimension 2 domains. 

For example, we consider $\pi(u)=u$ and $\pi(v)=\kappa v$ as switching functions. To find the first Lyapunov coefficient $a$ in the Hopf theorem \cite[pp.151--152]{gh1983}, one needs to make a change of variables to put the system into canonical form \cite[equation (3.4.10)]{gh1983}.

In a modified version of a system studied by Del Buono, Elia, and Lopez \cite[equation (3.5)]{DelBuono2014}, 
\begin{eqnarray*}
Z'_1&=&-2Z_1-Z_2+\frac{23}{10}+\delta\left(Z_1-\frac{9}{10}\right)\left(Z_2-\frac 12\right) \\
Z'_2&=&8Z_1+\mu\left(Z_2-\frac 12\right)-\frac{36}{5}\,,
\end{eqnarray*}
with fixed point at $(\frac{9}{10},\frac 12)$, as in the original Del Buono system, the Hopf parameter $a=-\frac{\delta^2}{2}$, which means that the inclusion of the term with parameter $\delta\ne 0$ guarantees a supercritical Hopf bifurcation at $\mu=2$, where eigenvalues of the Jacobian pass through the imaginary axis transversally with positive velocity $d=\frac{d}{d\mu}\text{Re}(\lambda)=\frac 12$.

With the ramp regularization, the same bifurcation occurs at $\mu=2$ in the differently modified Del Buono system (also rescaled to the codimension-2 region, $[-1,1]^2$):
\begin{eqnarray*}
u' &=& -2u-v+\frac 85 +\frac v2\left(u-\frac 45\right) \\
v' &=& \left[8u+\frac{\mu}{\kappa}v-\frac{32}{5}\right]\kappa\,.
\end{eqnarray*}
With $\kappa=3.3$, $\mu=2.005$, there is a periodic orbit inside the co-dimension 2 region $[-1,1]^2$.

\subsection{Nonuniqueness arising from periodic fast dynamics}
\label{ss_steeplimit}
We embed this in a system in dimension 3, similar but not identical to the original system of Del Buono (rescaled), as follows:
\begin{eqnarray}
u' &=& -2u-v+\frac 85 +\frac v2\left(u-\frac 45\right) +\frac 12 \left(\frac{1+w}{2}\right) \nonumber \\
v' &=& \left[8u+\frac{\mu}{\kappa}v-\frac{32}{5}+ pv\left(\frac{1+w}{2}\right) \right]\kappa \label{eq:codim3}\\
w' &=& \frac 23 +\frac{uv}{2}\,, \nonumber
\end{eqnarray}
for $u,v,w\in [-1,1]^3$ (and of course $u,v,w$ are each replaced by $-1$ or $1$ outside this region in the equations above), and using $\mu=2.005$, $\kappa=3.3$ as before, and also $p=2.1$.
Then starting from $w\ll-1$ ({\em i.e.}, the large scale variable $x_3<0$), the $u,v$-system is as before and converges to the limit cycle. But $w$ increases until the solution enters the codimension-3 region $[-1,1]^3$. In the limit $\varepsilon\to 0$, this happens everywhere on the limit cycle simultaneously. For small nonzero $\varepsilon$, the oscillations in $(u,v)$ occur on the fast time scale and so are very fast in relation to the (slow) motion of $x_3$. So the entry point on the $w=-1$ face of the codimension-3 region, while determined, is very sensitive to parameters, and to initial conditions. From some points on the limit cycle at $w=-1$, we go to a classical solution with $u>1, v<-1, w>1$ ({\em i.e.}, $x_1>\theta_1,\, x_2<\theta_2,\, x_3>\theta_3$), and from other points on the limit cycle we go to a co-dimension 1 solution with $v>1, w>1$, and $u\in(-1,1)$ (so $x_1=\theta_1+{\cal O}(\varepsilon),\, x_2>\theta_2,\, x_3>\theta_3$). See Figure~\ref{fig:codim3}.

\begin{figure}
    \begin{center}
    \subfigure[]
 {    \includegraphics[width=6cm]{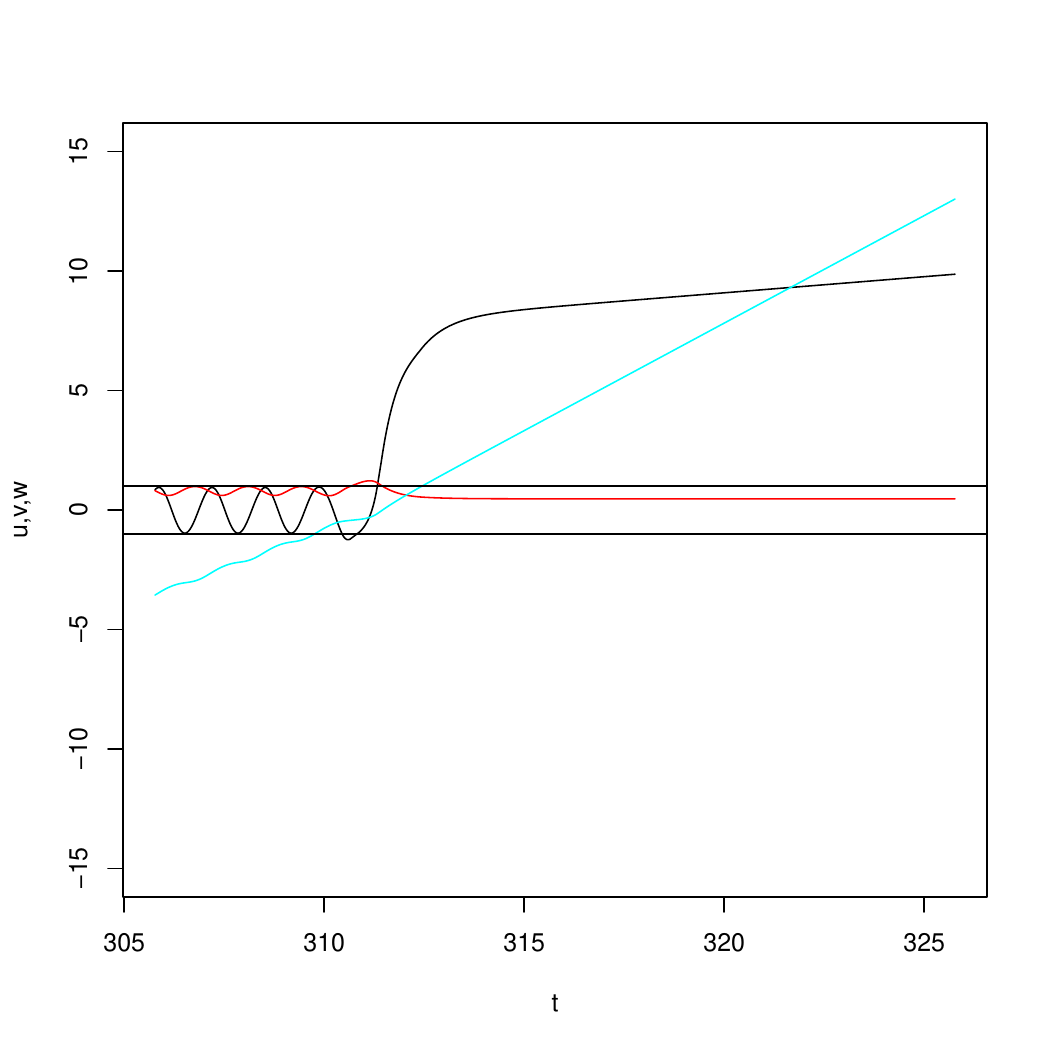} }   \hspace{.5cm} \subfigure[]
 {\includegraphics[width=6cm]{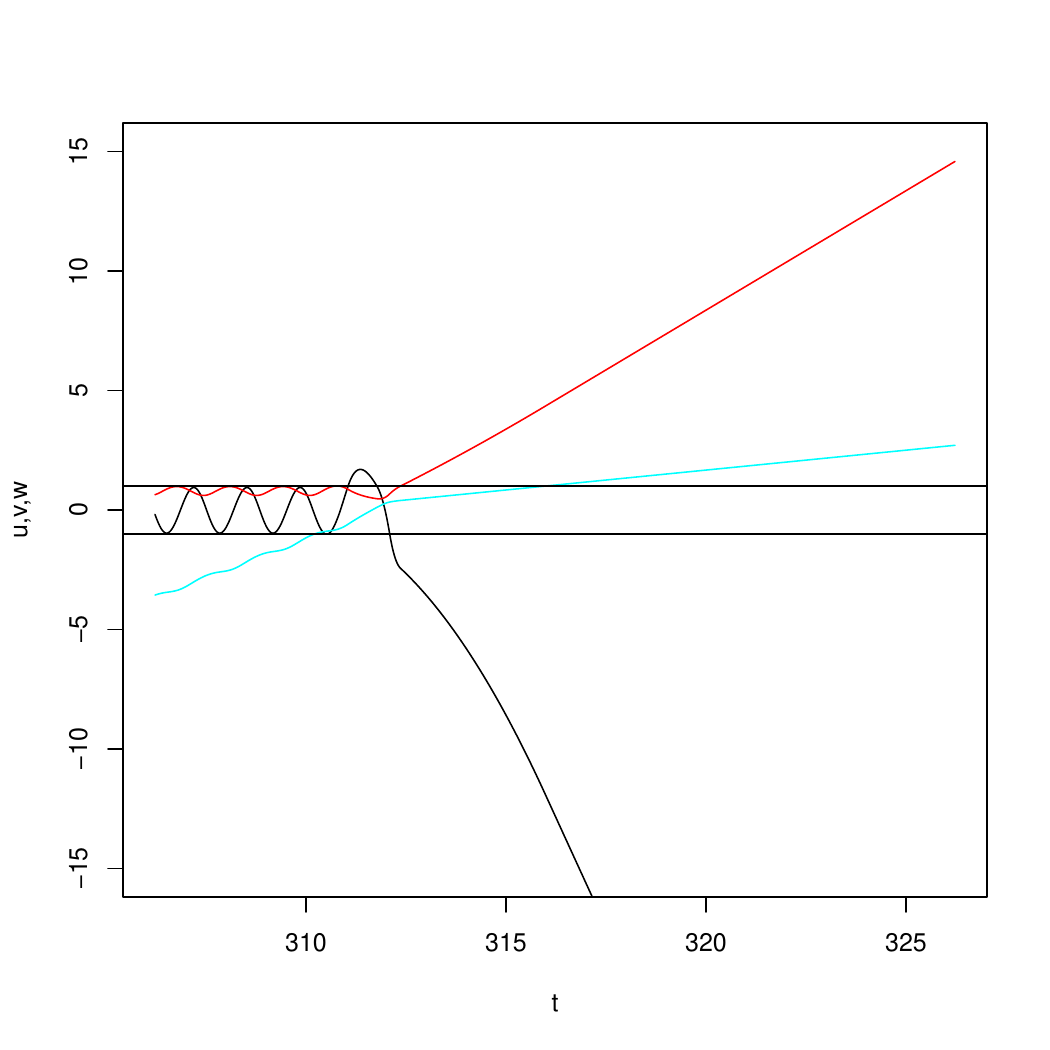} }
  \subfigure[]
 {  \includegraphics[width=6cm]{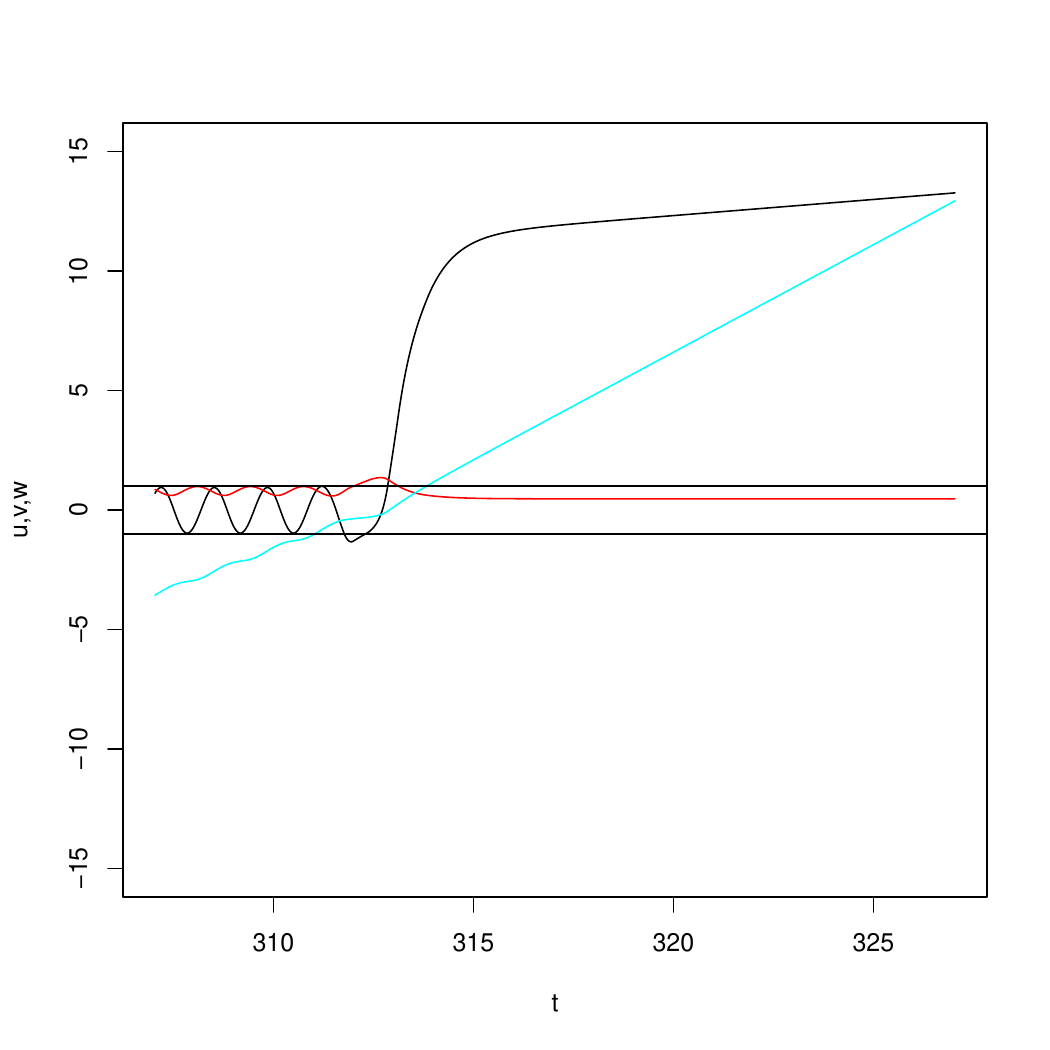}}\hspace{.5cm} \subfigure[]
 {\includegraphics[width=6cm]{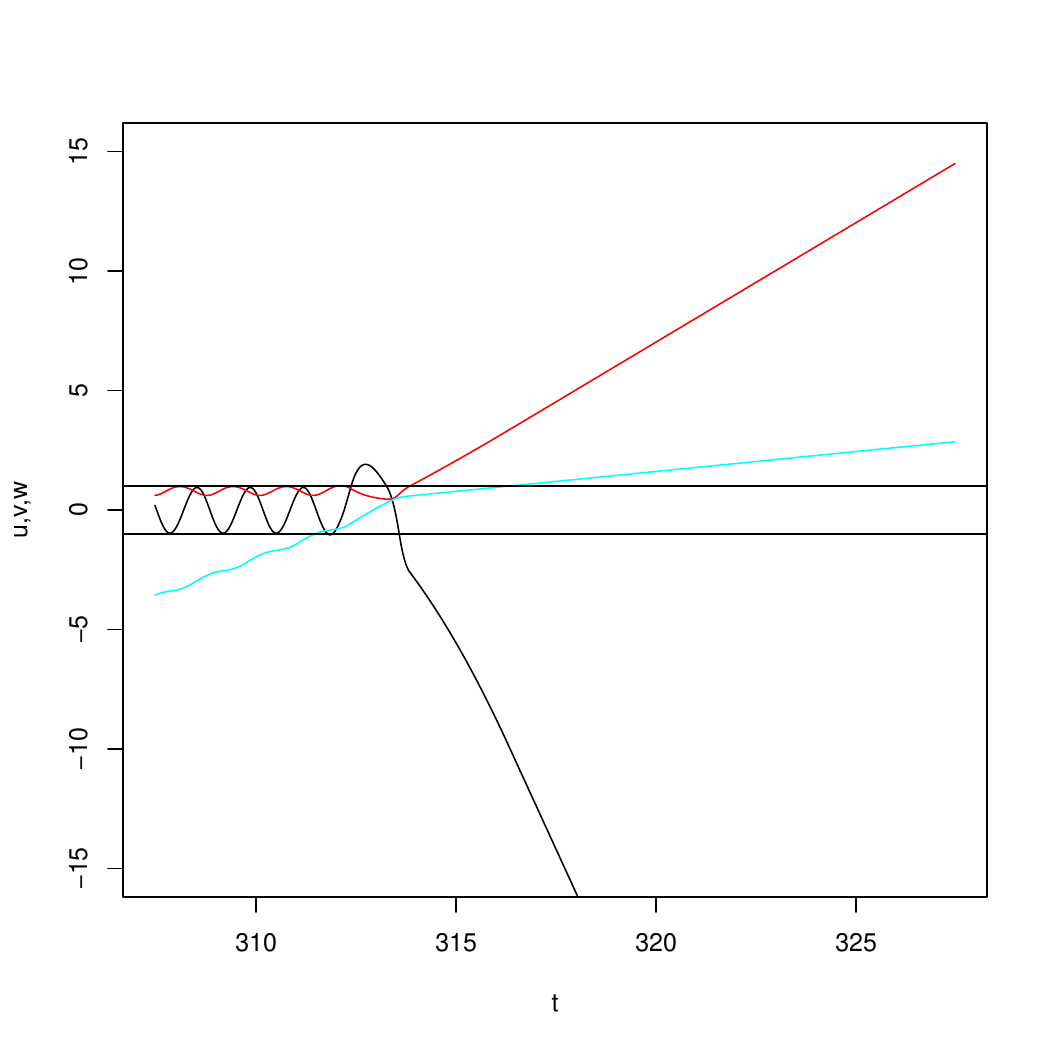}} 
    \caption{Simulations of equations~\eqref{eq:codim3}, with $u(0)=0.8, v(0)=0.9$ and (a) $w(0)=-199.4$, (b) $w(0)=-199.8$, (c) $w(0)=-200.2$, (d) $w(0)=-200.6$. This shows the sensitivity of the final trajectory with respect to the initial point. In (a) and (c) the trajectory exits the threshold intersection with $u$ going up and $v$ remaining at the threshold, while in (b) and (d), $u$ goes down and $v$ goes up.
    \label{fig:codim3}}
    \end{center}
\end{figure}

This exemplifies the sensitivity (nonuniqueness in the limit) of future solutions that can arise when the fast variables do not converge to a fixed point.
It should be noted that for the singularly perturbed problem the same sensitivity occurs with small changes in $\varepsilon$. The type of sensitive dependence found here was identified in examples in prior work~\cite{Edwards2018,MEV2013b}, but there the transition was from a co-dimension 3 region to a co-dimension 4 region.

\subsection{General condition for supercritical Hopf bifurcations}
\label{ss_generalhopf}
Stable limit cycles can appear in the hidden dynamics when a supercritical Hopf bifurcation occurs as a parameter is changed. For a general 2-dimensional multilinear system, we seek conditions on parameters of the equations for a Hopf bifurcation to occur and for it to be supercritical (so we get a stable periodic orbit when we perturb parameters in one direction or the other).

Consider the general 2-dimensional system
\begin{eqnarray*}
    u' &=& a_1 uv + b_1 u + c_1 v +d_1 \\
    v' &=& a_2 uv + b_2 u + c_2 v +d_2\,.
\end{eqnarray*}
Then the nullclines intersect where
\begin{equation} -v=\frac{b_1u+d_1}{a_1u+c_1}=\frac{b_2u+d_2}{a_2u+c_2}\,,\label{eq:fixedpoint}\end{equation}
so a fixed point occurs when
\[ -1<\frac{b_1u^*+d_2}{a_1u^*+c_1}=\frac{b_2u^*+d}{a_2u^*+c}<1 \quad\text{for}\quad -1<u^*<1\,. \]
The Jacobian of the system at $(u^*,v^*)$ is
\[ J^*=J(u^*,v^*)=\left[\begin{array}{cc} a_1v^*+b_1 & a_1u^*+c_1\\
a_2v^*+b_2 & a_2u^*+c_2 \end{array} \right]=\left[\begin{array}{cc} p & q\\
r & s \end{array} \right]\,, \]
where this expression defines $p,q,r,s$.

A Hopf bifurcation occurs when $\text{Tr}(J^*)=0$ and $\text{Det}(J^*)>0$, {\em i.e.}, where $p=-s$ and $ps-qr>0$.

\begin{proposition} Such a Hopf bifurcation is supercritical when
\[ \frac{(b_1c_1-a_1d_1)}{a_1v^*+b_1}[a_1^2(a_2d_2-b_2c_2)+a_2^2(b_1c_1-a_1d_1)]>0\,. \]
\end{proposition}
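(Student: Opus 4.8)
The plan is to reduce the claim to the standard normal‑form computation of the first Lyapunov coefficient from \cite[eq. (3.4.11)]{gh1983}, and then translate the resulting sign condition back into the coefficients $a_i,b_i,c_i,d_i$ using the fixed‑point equations. First I would shift the fixed point to the origin via $\xi=u-u^*$, $\eta=v-v^*$. Because the only nonlinearity is the single bilinear term, this yields $\xi'=p\xi+q\eta+a_1\xi\eta$ and $\eta'=r\xi+s\eta+a_2\xi\eta$, with $p,q,r,s$ the entries of $J^*$ as in the statement. At a Hopf point we have $s=-p$ and $\omega^2:=\det(J^*)=-p^2-qr>0$, so in particular $qr<0$ and hence $q\neq 0$ — exactly what the coordinate change below needs. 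Then I would bring the linear part to the canonical form $\begin{pmatrix}0&-\omega\\ \omega&0\end{pmatrix}$ by the explicit invertible substitution $x=\xi$, $y=-\frac1\omega(p\xi+q\eta)$. Expressing $\xi\eta$ through $x,y$ turns the system into $x'=-\omega y+F(x,y)$, $y'=\omega x+G(x,y)$ with $F,G$ homogeneous quadratics and no $y^2$ term, namely $F=-\frac{a_1p}{q}x^2-\frac{a_1\omega}{q}xy$ and $G=\frac{p(pa_1+qa_2)}{\omega q}x^2+\frac{pa_1+qa_2}{q}xy$.

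Next I would feed $F,G$ into the first‑Lyapunov‑coefficient formula. Since $F$ and $G$ have no cubic part and $F_{yy}=G_{yy}=0$, that formula collapses to $16a=\frac1\omega\bigl(F_{xy}F_{xx}-G_{xy}G_{xx}-F_{xx}G_{xx}\bigr)$. Substituting the second partials and simplifying — this is the step where one invokes $\omega^2=-p^2-qr$ to kill the $p^2a_1^2$ and $pqa_1a_2$ contributions — I expect to arrive at $a=-\dfrac{p\,(a_1^2 r+a_2^2 q)}{8\,q\,\omega^2}$. This is consistent with the modified Del Buono example in the text, where $a_2=0$, $p=-2$, $q=-1$, $r=8$, $\omega^2=4$, giving $a=-\delta^2/2$, which is a useful sanity check on the overall sign and the factor $1/(8q\omega^2)$.

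Finally, since $\omega^2>0$, supercriticality is equivalent to $a<0$, that is (multiplying by $q^2>0$) to $pq(a_1^2 r+a_2^2 q)>0$. It then remains to rewrite $pq$ and $pr$ using the equilibrium relations $a_1u^*v^*+b_1u^*+c_1v^*+d_1=0$ and $a_2u^*v^*+b_2u^*+c_2v^*+d_2=0$: eliminating $v^*$ in each (using $v^*q=-(b_1u^*+d_1)$ and $v^*s=-(b_2u^*+d_2)$) gives $pq=b_1c_1-a_1d_1$ and $rs=b_2c_2-a_2d_2$, and at the Hopf point $s=-p$ converts the latter into $pr=a_2d_2-b_2c_2$. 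Hence $pq(a_1^2 r+a_2^2 q)=q\bigl(a_1^2(a_2d_2-b_2c_2)+a_2^2(b_1c_1-a_1d_1)\bigr)$, and writing $q=pq/p=(b_1c_1-a_1d_1)/(a_1v^*+b_1)$ turns this into exactly the left‑hand side of the claimed inequality. The argument in fact yields an "iff"; the statement records only the supercritical ($>0$) half, and the degenerate case $p=0$, where $a=0$, is tacitly excluded by the denominator $a_1v^*+b_1$.

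The main obstacle is the middle step. The normal‑form reduction and the bookkeeping inside the Lyapunov‑coefficient formula are routine but genuinely error‑prone, and the closing simplification hinges on the nontrivial cancellation forced by $\omega^2=-p^2-qr$; keeping track of the overall sign and of the prefactor through that cancellation is the crux, with the Del Buono check serving to catch slips.
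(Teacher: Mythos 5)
Your proposal is correct and follows essentially the same route as the paper: reduce to canonical form, apply the Guckenheimer--Holmes first-Lyapunov-coefficient formula (which collapses since only $x^2$ and $xy$ quadratic terms survive), simplify using $\omega^2=-p^2-qr$, and translate via $pq=b_1c_1-a_1d_1$, $rs=b_2c_2-a_2d_2$, $s=-p$. The only difference is your choice of linear coordinate change ($x=\xi$, $y=-\frac1\omega(p\xi+q\eta)$ versus the paper's $T=\bigl(\begin{smallmatrix}q&0\\-p&-\omega\end{smallmatrix}\bigr)$), which rescales $a$ by the positive factor $1/q^2$ — giving $a=-p(a_1^2r+a_2^2q)/(8q\omega^2)$ in place of the paper's $a=-pq(a_1^2r+a_2^2q)/(8\omega^2)$ — and hence yields the identical sign condition.
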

\begin{proof}
At a Hopf bifurcation, eigenvalues are $\lambda=\pm i\omega$, and $\omega^2=\text{det}(J^*)$, since $\text{Tr}(J^*)=0$. Eigenvalues are given by 
\[ \left[\begin{array}{cc} p+i\omega & q\\
r & s+i\omega \end{array} \right] \xi ={\bf 0}\,, \]
so, for example, we can take $\xi=\left[\begin{array}{c} q \\ -p\end{array} \right]+i\left[\begin{array}{c} 0 \\ -\omega\end{array} \right]$. In order to translate coordinates to get the system into the canonical form for the Hopf theorem, we use the change-of-coordinates matrix
\[ T=\left[\begin{array}{cc} q & 0 \\ -p & -\omega\end{array} \right],\quad\text{with}\quad T^{-1}=-\frac{1}{\omega q}\left[\begin{array}{cc} -\omega & 0 \\ p & q\end{array} \right]\quad\text{and}\quad D=\left[\begin{array}{cc} 0 & -\omega \\ \omega & 0\end{array} \right] \]
so that $JT=TD$. Now our new coordinates $(x,y)$ are defined by 
\[ \left[\begin{array}{c} u \\ v \end{array} \right]=T\left[\begin{array}{c} x \\ y \end{array} \right]=\left[\begin{array}{c} qx \\ -px-\omega y \end{array} \right]\,.\]
Thus,
\begin{eqnarray*}
    \left[\begin{array}{c} x' \\ y' \end{array} \right] &=& 
    T^{-1}\left[\begin{array}{c} u' \\ v' \end{array} \right]=-\frac{1}{\omega q} \left[\begin{array}{c} -\omega a_1 qx(-px-\omega y)+ \ldots \\ pa_1 qx(px-\omega y)+q^2a_2 x(-px-\omega y)+ \ldots \end{array} \right] \\
    &=& -\frac{1}{\omega} \left[\begin{array}{c} (\omega a_1 p) x^2 +(\omega^2 a_1) xy + \ldots \\ -p(a_1p+a_2 q) x^2 -\omega (a_1p+a_2 q) xy + \ldots \end{array} \right] = \left[\begin{array}{c} f(x,y) \\ g(x,y) \end{array} \right]\,,
\end{eqnarray*} 
where linear and constant terms are not written explicitly, since they will not be needed in applying the Hopf theorem.
Then
\[ f_{xx}=-2a_1 p, \quad f_{xy}=-\omega a_1,\quad g_{xx}=2\frac{p}{\omega}(a_1p+a_2q), \quad g_{xy}=(a_1p+a_2q),\] 
so, following the notation in \cite[Theorem 3.4.2]{gh1983},
\begin{eqnarray*} 
a&=&\frac{1}{16\omega}[f_{xx}f_{xy}-g_{xx}g_{xy}-f_{xx}g_{xx}]\\
&=&\frac{1}{16\omega}\left[ 2\omega a_1^2 p -2\frac{p}{\omega}(a_1 p+a_2 q)^2+4\frac{a_1 p^2}{\omega}(a_1p+a_2q)]  \right]\\
&=& \frac{p}{8\omega^2}\left[  \omega^2a_1^2+(a_1p+a_2q)[-(a_1p+a_2q)+2a_1p] \right] \\
&=& \frac{p}{8\omega^2}\left[  \omega^2a_1^2 +a_1^2p^2 - a_2^2q^2 \right]\,.
\end{eqnarray*}
Now, note that $\omega^2=ps-rq=-p^2-rq$, so 
\[ a=\frac{pq}{8(p^2+rq)}(a_1^2r+a_2^2q)\]
and since $p^2+rq<0$, the Hopf bifurcation is supercritical ($a<0$, according to \cite[Theorem 3.4.2]{gh1983}) when
\begin{equation} pq(ra_1^2+qa_2^2)>0 \,.
\label{eq:supercritical}
\end{equation}
This can be expressed almost entirely in terms of the original parameters, as follows.
At a fixed point, by~\eqref{eq:fixedpoint}, 
\begin{equation*} p = a_1v^*+b_1=-a_1\left(\frac{b_1u^*+d_1}{a_1u^*+c_1}\right)+b_1 = \frac{b_1c_1-a_1d_1}{a_1u^*+c_1} \,,
\end{equation*}
so 
\begin{equation} p = \frac{b_1c_1-a_1d_1}{q}
\label{eq:p}
\end{equation}
and, similarly,
\begin{equation*} r = a_2v^*+b_2=-a_2\left(\frac{b_2u^*+d_2}{a_2u^*+c_2}\right)+b_2 = \frac{b_2c_2-a_2d_2}{a_2u^*+c_2} \,,
\end{equation*}
so
\begin{equation}
    r = \frac{b_2c_2-a_2d_2}{s}\,.
\label{eq:r}
\end{equation}
Thus, $pq=b_1c_1-a_1d_1$ and since $s=-p$, the supercriticality condition becomes
\[ \frac{(b_1c_1-a_1d_1)}{p}\left[ a_1^2(a_2d_2-b_2c_2)+a_2^2(b_1c_1-a_1d_1)  \right] >0 \,. \]
\end{proof}

\subsection{Hopf bifurcations in ambiguous cases}
\label{ss_hopfexamples}
As anticipated in the introduction, (a3) and (a4) of Theorem 6.2 in \cite{gh2015} summarize the cases in which the behavior of the solution after entering a codimension-2 domain cannot be determined by the singular perturbation approach. The aim of this section is to simplify the Hopf bifurcation condition obtained above in the situation described by (a4). 

\subsubsection{General conditions for a supercritical Hopf bifurcation in (a4)}
\label{ss_a4Hopf}
We assume, without loss of generality --- as done in \cite{gh2015} --- that $\Sigma_{\alpha}\cap\Sigma_{\beta}$ is entered through a codimension-1 sliding mode along $\Sigma_{\alpha}$ from the region $y^{(2)}<\theta_2$. The other main condition which defines the cases analyzed in Theorem 6.2 of \cite{gh2015} is that the nullcline $v_u(u)$ \emph{turns right}; that is, given $u_0$ such that $v_u(u_0)=-1$, the branch of the nullcline starting at $u_0$ is confined in the region $u\geq u_0$. Let $(u^*,v^*)$ be the fixed point closed to $(u_0,-1)$, which, as proved in Theorem 6.2 of \cite{gh2015}, is the only one which might be stable. The (a4) case requires that the branch of $v_v(u)$ containing $(u^*,v^*)$ ends up on the right edge of $[-1,1]^2$ and that $f_1^{++}(\theta_1,\theta_2)>0$. The conditions above imply that the slopes of the nullclines $v_u(u),v_v(u)$ have to satisfy
$\frac{dv_u}{du}|_{u=u^*}=\frac{a_1d_1-b_1c_1}{q^2} >0$ and $\frac{dv_v}{du}|_{u=u^*}=\frac{a_2d_2-b_2c_2}{s^2} >0$, as well as $\frac{dv_u}{du}|_{u=u^*}<\frac{dv_v}{du}|_{u=u^*}$, so $\frac{a_1d_1-b_1c_1}{q^2} < \frac {a_2d_2-b_2c_2}{s^2}$.
 All three of these conditions can be summarized as
\[ \frac{a_2d_2-b_2c_2}{s^2} > \frac{a_1d_1-b_1c_1}{q^2}  > 0\,, \]
{\em i.e.},
\[ \frac{a_2d_2-b_2c_2}{(a_2u^*+c_2)^2}>\frac{a_1d_1-b_1c_1}{(a_1 u^* + c_1)^2} >0\,. \]
At a fixed point, by~\eqref{eq:p} and~\eqref{eq:r}, this means 
\[ \frac rs < \frac pq <0\,.\]

In order for a Hopf bifurcation to be possible, we must have $\text{Tr}^2(J^*)-4\text{det}(J^*)<0$, which is $(p+s)^2-4(ps-qr)<0$ or, equivalently, $(p-s)^2+4qr<0$, so it is necessary that $qr<0$. Also, at a Hopf bifurcation point, $p=-s$. And the (a4) fixed point condition above implies that $r$ and $s$ have opposite sign, as do $p$ and $q$. Thus,
\[ \text{sign}(p)=\text{sign}(r)=-\text{sign}(q)=-\text{sign}(s).\]
Thus, $pq<0$ and the supercriticality condition~\eqref{eq:supercritical} reduces to 
\[ ra_1^2+qa_2^2<0,\]
which can be written, by~\eqref{eq:p} and \eqref{eq:r} again, as
\[ \frac{a_1^2(a_2d_2-b_2c_2)-a_2^2(a_1d_1-b_1c_1)}{p}<0.\]
Thus, if $p=a_1v^*+b_1>0$, the supercriticality condition is
\[ a_1^2(a_2d_2-b_2c_2)-a_2^2(a_1d_1-b_1c_1)<0, \]
and if $p<0$, the supercriticality condition is
\[ a_1^2(a_2d_2-b_2c_2)-a_2^2(a_1d_1-b_1c_1)>0. \]
In the following subsections, we will use the results above in order to analyze two different systems whose hidden dynamics is described by means of either ramp or Hill functions which are, in turn, defined by a parameter $\kappa$ (see Remark \ref{kepsilon}). Our main goal is to detect the Hopf bifurcations with respect to $\kappa$ and analyze their nature.

\subsubsection{Supercritical Hopf bifurcation with both ramp and Hill functions}
Consider the system

\begin{equation}
\begin{split}
    u'&=uv-u+v \\
    v'&=\kappa(a_2 uv-2u+v),
\end{split}
 \label{eq:uvexample1}
\end{equation}
which can be interpreted as the hidden dynamics of a discontinuous system in a codimension-2 domain, regularized using ramp functions having slopes $\frac{1}{\varepsilon}$ in the first variable and $\frac{\kappa}{\varepsilon}$ in the second variable.
If $1<a_2<\sqrt{2}$, then \eqref{eq:uvexample1} is an instance of an (a4) system~\cite{gh2015} with $f^{-+}$ pointing upwards and there is a supercritical Hopf bifurcation at $\kappa=1$. When $\kappa<1$, the fixed point at $(0,0)$ is stable. When $\kappa>1$, the fixed point is unstable and there is a stable limit cycle around it. This can be checked by the conditions of subsection~\ref{ss_a4Hopf} above as follows:
\[a_1=1, b_1=-1, c_1=1, d_1=0, b_2=-2, c_2=1, d_2=0.\]
The Jacobian matrix at the origin is
\[ J^*=\left[\begin{array}{cc}-1 & 1\\ -2\kappa & \kappa \end{array}\right]\,, \]
so $p=-1, q=1, r=-2\kappa, s=\kappa$. 
At $\kappa=1$, $p=-s$, so $\text{Tr}(J^*)=0$ and $\text{det}(J^*)=ps-qr=1$, so we have a Hopf bifurcation. The (a4) condition is satisfied, since $\frac rs=-2<-1=\frac pq < 0$. The supercriticality condition is satisfied since $p<0$ and
$a_1^2(a_2d_2-b_2c_2)-a_2^2(a_1d_1-b_1c_1)=2-a_2^2>0$
for $a_2<\sqrt{2}$.

Numerical simulation from the entry point $(u_0,v_0)=\left(-\frac 12, -1\right)$ to the box $[-1,1]^2$, with $a_2=\frac{11}{10}$ and $\kappa=1.001$, shows an asymptotic approach to the limit cycle.

Numerical study of \eqref{eq:uvexample1} translated to the Hill function context, so that the right hand side of each equation is premultiplied by $\frac 14 (1-u^2)$ and $\frac 14 (1-v^2)$, respectively, behaves similarly: when $\kappa$ is taken slightly past the bifurcation point ($\kappa=1.01$, for example) the solution entering from the black wall below converges to a stable limit cycle around the fixed point at $(0,0)$. The supercriticality can be confirmed analytically as follows. The new equations read
\begin{equation}
\begin{split}
    u'&=\frac14(1-u^2)\left(uv-u+v\right)=-\frac{1}{4}u^3v+\frac14u^3-\frac14u^2v+\frac{1}{4}uv-\frac14u+\frac14v  \\
    v'&=\frac14\kappa(1-v^2)\left(a_2 uv-2u+v\right) =\kappa\left(-\frac{1}{4}a_2uv^3+\frac12uv^2-\frac14v^3+\frac{1}{4}a_2uv-\frac12u+\frac14v\right)\,.
\end{split}
 \label{eq:uvexample1_hill}
\end{equation}
Note that the system obtained is not multilinear; therefore we cannot directly apply the results obtained at the beginning of the present section.
The Jacobian matrix at the origin defines $p=-\frac14, q=\frac14, r=-\frac12, s=\frac14$ for the bifurcation value $\kappa=1$. Thus,
\[ T=\left[\begin{array}{cc} \frac14 & 0 \\ \frac14 & -\frac{1}{4}\end{array} \right]\,,\quad\quad T^{-1}=\left[\begin{array}{cc} 4 & 0 \\ 4 & -4\end{array} \right]\,, \]
and
\begin{eqnarray*}
    \left[\begin{array}{c} x' \\ y' \end{array} \right] &=& 
    \left[\begin{array}{c} u^3-u^2v+uv+ \ldots \\ u^3+v^3-u^2v-2uv^2+(1-a_2)uv+ \ldots \end{array} \right],
\end{eqnarray*} 
where fourth-order terms (as well as linear and constant ones) are not written explicitly since they do not affect the result, given that $(u^*,v^*)=(0,0)$. We get
\begin{eqnarray*}
f &=& \left(\frac14x\right)^3-\left(\frac14x\right)^2\left(\frac14x-\frac{1}{4}y\right)+\left(\frac14x\right)\left(\frac14x-\frac{1}{4}y\right)\\
&=& \frac{1}{64}x^2y+\frac{1}{16}x^2-\frac{1}{16}xy,
\end{eqnarray*} 
so that $f_x=\frac{1}{32}xy+\frac{1}{8}x-\frac{1}{16}y$, $f_{xx}=\frac{1}{32}y+\frac{1}{8}$, $f_{xy}=\frac{1}{32}x-\frac{1}{16}$, and $f_{yy}=f_{xyy}=f_{xxx}=0$. As for $g$,
\begin{eqnarray*}
g &=& \left(\frac14x\right)^3+\left(\frac{1}{64}x^3-\frac{1}{64}y^3-\frac{3}{64}x^2y+\frac{3}{64}xy^2\right)\\
&-&\left(\frac14x\right)^2\left(\frac14x-\frac{1}{4}y\right)-2\left(\frac14x\right)\left(\frac{1}{16}x^2+\frac{1}{16}y^2-\frac{1}{8}xy\right)\\
&+&\frac{1-a_2}{4}x\left(\frac14x-\frac{1}{4}y\right)\\
&=& -\frac{1}{64}x^3-\frac{1}{64}y^3+\frac{1}{32}x^2y+\frac{1}{64}xy^2+\frac{1-a_2}{16}x^2+\frac{a_2-1}{16}xy,
\end{eqnarray*} 
so that $g_x=-\frac{3}{64}x^2+\frac{1}{16}xy+\frac{1}{64}y^2+\frac{1-a_2}{8}x+\frac{a_2-1}{16}y$, $g_y=-\frac{3}{64}y^2+\frac{1}{32}x^2+\frac{1}{32}xy+\frac{a_2-1}{16}x$, $g_{xx}=-\frac{3}{32}x+\frac{1}{16}y+\frac{1-a_2}{8}$, $g_{xy}=\frac{1}{16}x+\frac{1}{32}y+\frac{a_2-1}{16}$, $g_{yy}=-\frac{3}{32}y+\frac{1}{32}x$, $g_{xxy}=\frac{1}{16}$, and $g_{yyy}=-\frac{3}{32}$. Eventually we get
\begin{eqnarray*}
a &=& \frac{1}{16}(g_{xxy}+g_{yyy})+\frac{1}{16\omega}(f_{xx}f_{xy}-g_{xx}g_{xy}-f_{xx}g_{xx})=\\
&=&\frac{1}{16}\cdot\left(-\frac{1}{32}\right)+\frac{1}{4}\left(-\frac{1}{8}\cdot\frac{1}{16}+\frac{(a_2-1)^2}{128}+\frac{a_2-1}{64}\right)=\frac{a_2^2-2}{512}<0
\end{eqnarray*} 
for $a_2<\sqrt{2}$. 

Going back to numerical simulation of the system with ramp functions, {\em i.e.},~\eqref{eq:uvexample1}, with $a_2=\frac{11}{10}$ and $\kappa=1.001$ the solution from the black wall below converges to a stable limit cycle. As $\kappa$ is increased, the amplitude of the limit cycle grows, and already for $\kappa=1.05$, while the limit cycle itself is still in the $[-1,1]^2$ box, the trajectory entering from the black wall at $u_0=-\frac 12$, $v_0=-1$ exits the box slightly through the lower edge, and then reenters. Thus, technically, the trajectory reenters the black wall momentarily, but only on the $\varepsilon$ scale and while $v$ is also still evolving on the $\varepsilon$ scale, so it is able to reenter the box. 

As $\kappa$ is increased further to, e.g., $\kappa=1.1$, the limit cycle itself extends outside the box, effectively passing through the black wall to the right, then the $\mathcal{R}^{+-}$ classical domain, and then the black wall below, before reentering $[-1,1]^2$. Again, all of this happens on the $\varepsilon$ scale. 

At $\kappa=1.2$, however, the limit cycle has ceased to exist, and the solution exits the box $[-1,1]^2$ into the $\mathcal{R}^{++}$ domain, where a classical solution exists, so it goes away from the threshold intersection in the large space.

If we do the same exercise in the Hill function context, {\em i.e.}, with reference to \eqref{eq:uvexample1_hill}, the behavior is somewhat different. It is clear that the factors $(1-u^2)$ and $(1-v^2)$ that multiply the right hand sides imply that $u=\pm 1$ are nullclines for $u$, and $v=\pm 1$ are nullclines for $v$. So it is not possible in this context for a limit cycle of the fast variables to go outside the box $[-1,1]$ and the only way for any trajectory to exit the box is through a fixed point on the boundary. Of course, it is only fixed for the fast variables, not for the original variables in the large space.

In our example, the stable limit cycle persists for $\kappa$ much larger than in the ramp function case. For $\kappa=1.66$ we still see the limit cycle numerically, and converge to it from the entry point at $u_0=-\frac 12, v_0=-1$, though it is now large and approaches very close to the right and lower boundaries of the box $[-1,1]^2$.  At $\kappa=1.68$, the trajectory from the entry point leaves through the point $(1,1)$, and goes into the $\mathcal{R}^{++}$ domain as a classical solution. See Figure \ref{fig:heteroclinic}, which shows how the qualitative behavior of the solution varies with $\kappa$ and compares the solution at the bifurcation value with that of the corresponding singularly perturbed problem. In the latter, the boundaries represented correspond to $y_i=\theta_i\pm\varepsilon$, $i=1,2$.

\begin{figure}[t!]
    \begin{center}
     \subfigure[]
 {    \includegraphics[width=3.6cm]{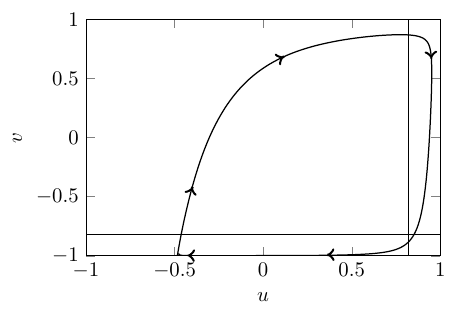}  
 }\hspace{-0.3cm}
 \subfigure[]{
 \includegraphics[width=3.6cm]{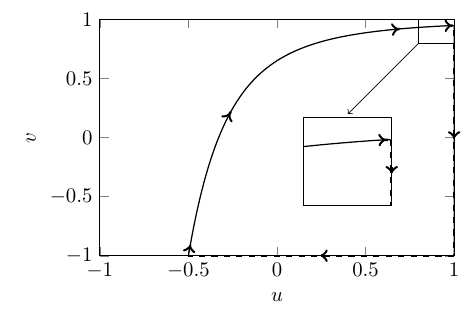}}\hspace{-0.3cm}
 \subfigure[]{
 \includegraphics[width=3.6cm]{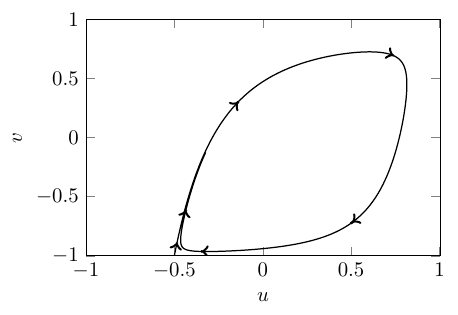}}\hspace{-0.3cm}
 \subfigure[]{
    \includegraphics[width=3.6cm]{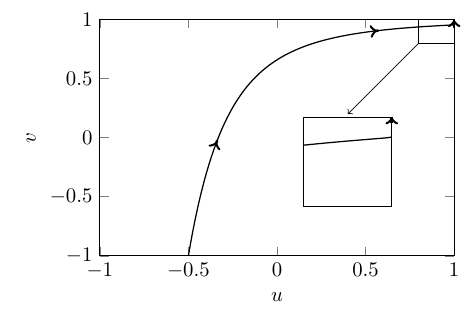}}

        \subfigure
 {    \includegraphics[width=3.6cm]{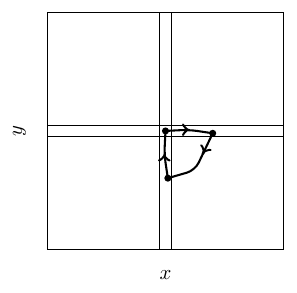}  
 }\hspace{-0.3cm}
 \subfigure{
 \includegraphics[width=3.6cm]{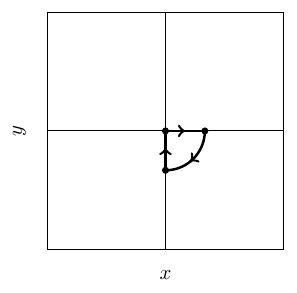}}\hspace{-0.3cm}
 \subfigure{
 \includegraphics[width=3.6cm]{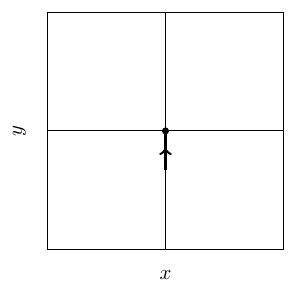}}\hspace{-0.3cm}
 \subfigure{
    \includegraphics[width=3.6cm]{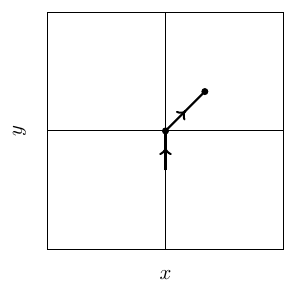}}
    \caption{Equations~\eqref{eq:uvexample1_hill} with $a_2=\frac{11}{10}$, $u(0)=-0.5, v(0)=-1$ and (a) $\varepsilon=10^{-2}$, $\kappa=1.66$, (b) $\varepsilon=0$, $\kappa=1.66$, (c) $\kappa=1.25$, (d) $\kappa=1.68$, simulated in the fast variables (top) and sketched in the original variables (bottom). This shows the sensitivity of the final trajectory with respect to the choice of $\kappa$. Pictures (a) and (b) describe the non-generic case in which the trajectory in the $[-1,1]^2$ box hits exactly the saddle point on the right edge. Pictures (b) and (d) include a zoom of the upper-right corner.
    \label{fig:heteroclinic}}
    \end{center}
\end{figure}
The bifurcation between these two behaviours appears to be a heteroclinic bifurcation. This happens when the limit cycle approaches, and asymptotically touches, the fixed point where the $v$-nullcline intersects the right edge at $u=1$. This point, $(u^*,v^*)=(1,\frac{2}{a_2+1})$, is unstable in the $v$ direction; the flow on the edge is towards either the corner at $(1,1)$ or at $(1,-1)$. The Jacobian matrix at this point is
\begin{eqnarray*}
    J(u^*,v^*)&=&\frac 14 \left[\begin{array}{cc}(-2u^*)(u^*v^*-u^*+v^*) & 0\\ \kappa(1-(v^*)^2)(a_2 v^*-2) & \kappa(1-(v^*)^2)(a_2u^*+1) \end{array}\right] \\
&=&\left[\begin{array}{cc}-2\frac{3-a_2}{a_2+1} & 0\\ -2\frac{(a_2+1)^2-4}{(a_2+1)^3}\kappa & \frac{(a_2+1)^2-4}{a_2+1}\kappa \end{array}\right].
\end{eqnarray*} 
This has a positive eigenvalue $\frac{(a_2+1)^2-4}{a_2+1}\kappa$ corresponding to the vertical eigenvector, which is actually the unstable manifold of the saddle point. The other eigenvalue is $-2\frac{3-a_2}{a_2+1}$, and the stable manifold enters the point on the eigenvector $((a_2+1)^2(((a_2+1)^2-4)\kappa+2(3-a_2)),\, 2((a_2+1)^2-4)\kappa)$. 

In order to demonstrate that there is a bifurcation, first consider the rectangle 
\[ R=\left[-\frac 12, 1\right]\times \left[-1, \frac{2}{a_2+1}\right]\,. \]
The flow on the left boundary of $R$ at $u=-\frac 12$ is to the right, since $u'=\frac{3}{32}(v+1)$ there. The lower and right edges of $R$ are boundaries of $[-1,1]^2$ and therefore are solution trajectories themselves. Thus, the only way to exit $R$ (potentially) is through the upper edge at $v=\frac{2}{a_2+1}$.

For $\kappa=1$, where the limit cycle first appears, the flow from the entry point, numerically, does not exit $R$ at the top edge but crosses the $v$ nullcline at a lower point, and turns downwards. An analytic proof of this claim is given in Appendix \ref{s_appendix}. From the entry point, $u'>0$ and $v'>0$ on the first part of the trajectory, which cannot cross the $u$ nullcline below $v=0$, since the flow across that curve must be from below to above. It must continue moving upward and to the right until either it exits $R$ through $v=\frac{2}{a_2+1}$ or it crosses the $v$ nullcline and goes downwards. As $\kappa$ increases, it is clear that the vertical component of the vector field increases everywhere (except where it is $0$), and so the initial part of the trajectory from the entry point, until it crosses the $v$ nullcline (should that occur), must move upwards with steeper slope. As $\kappa\to\infty$, the solution approaches the vertical line at $u=-\frac 12$, and so exits $R$ close to the point $(-\frac 12, \frac{2}{a_2+1})$.

Now consider a $\kappa>1$ small enough that the trajectory from the entry point does not leave $R$, but crosses the $v$ nullcline. This trajectory must then go downward and to the right ($u'>0, v'<0$) until the $u$ nullcline is crossed. Then it continues downward and to the left ($u'<0, v'<0$) until it crosses the lower part of the $v$ nullcline, and from there, upward and to the left ($u'<0, v'>0$) until it crosses the lower part of the $u$ nullcline. Call this point $(u_2,v_2)$. Let the point $(u_1,v_1)$ be the point where the initial segment of the trajectory first reached $v=v_2$. Thus, $v_1=v_2$ but $u_1<u_2$. 

Consider the region ${\cal D}$ bounded by the trajectory from $(u_1,v_1)$ to $(u_2,v_2)$ and the straight line segment between those two points. ${\cal D}$ is an invariant region, since the only part of the boundary that is not a trajectory itself is the straight line segment between $(u_1,v_1)$ and $(u_2,v_2)$, and the flow there is upwards ($v'>0$). Inside ${\cal D}$ there is only one fixed point, the unstable spiral point at $(0,0)$. Since that point is a repellor, the continued trajectory from $(u_2,v_2)$, which is confined to ${\cal D}$, must converge to a limit cycle by the Poincar\'e--Bendixson theorem; see Figure \ref{fig:P-B}.

\begin{figure}
    \begin{center}
        \includegraphics[width=8cm]{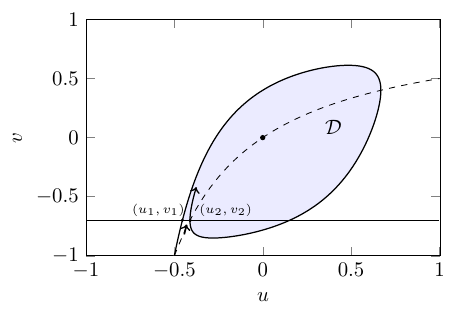}    
    \caption{Invariant region $\mathcal{D}$ on which the Poincar\'e-Bendixson Theorem is applied.}
    \label{fig:P-B}
    \end{center}
\end{figure}

On the other hand, a trajectory that exits $R$ on $v=\frac{2}{a_2+1}$ cannot cross the $v$ nullcline, since $v'>0$ above $v=\frac{2}{a_2+1}$, so the solution must go to $(1,1)$ asymptotically. (Since this occurs on the fast ($\tau$) time scale, its asymptotic approach to $(1,1)$ implies exit from the threshold intersection into the $\mathcal{R}^{++}$ domain, and the solution continues there in slow time.) At $\kappa=1$, the initial trajectory hits the $v$ nullcline. As $\kappa$ increases, the point at which that crossing occurs moves up along the nullcline. Eventually (certainly as $\kappa\to\infty$), the initial trajectory exits on the top edge of $R$. So there must be a bifurcation point $\kappa=\kappa^*$ such that the initial trajectory hits the intersection of the $v$ nullcline and the top edge of $R$, which is the saddle point $\left(1,\frac{2}{a_2+1}\right)$. 

This is enough to show that there is a bifurcation in behaviour at a parameter value $\kappa=\kappa^*$, below which the trajectory from the entry point converges to a limit cycle, and above which the trajectory from the entry point exits the $[-1,1]^2$ box at $(1,1)$ and goes to a classical solution in the $\mathcal{R}^{++}$ domain.

Although not crucial to our main argument, it would be nice to demonstrate that this bifurcation is, in fact, a heteroclinic bifurcation, in which the limit cycle collides with the saddle point and disappears, but for this more is needed.
The existence of a parameter value $\kappa^*$ such that the initial trajectory coincides with the stable manifold ($W^s$) of the saddle point at $\left(1,\frac{2}{a_2+1}\right)$ is one of the ingredients. The unstable manifold ($W^u$) of the saddle point is the $u=1$ edge of the box $[-1,1]^2$. Guckenheimer and Holmes give a version of the homoclinic bifurcation theorem~\cite[Theorem 6.1.1, p.292]{gh1983}, for which one needs a saddle loop, which means a homoclinic orbit, so that $W^s$ coincides with $W^u$ on the loop. Here, we have not a homoclinic orbit, but a heteroclinic cycle, including $W^u$, the line from $\left(1,\frac{2}{a_2+1}\right)$ to $(1,-1)$, and part of the lower edge of the $[-1,1]^2$ box, from $(1,-1)$ to $(u_0,-1)$, the entry point. So the cycle contains three saddle points. Nearby trajectories inside the box of course follow this heteroclinic cycle closely.

We also need a transversal, $M$, which we can take here to be the line $u=0$, which passes through the unstable spiral point at $(0,0)$, around which any limit cycle must go. The point $S(\kappa)$ is the intersection of the stable manifold of $(1,\frac{2}{a_2+1})$ with $M$, and the point $U(\kappa)$ is the intersection of the trajectory from the entry point with $M$; see Figure \ref{fig:homoclinic}.

\begin{figure}
    \begin{center}
        \includegraphics[width=8cm]{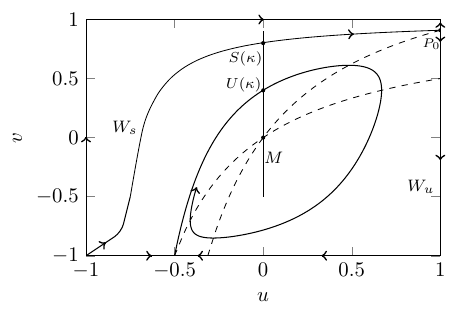}    
    \caption{Representation of the homoclinic bifurcation theorem. Here, in the limiting system ($\varepsilon\to 0$), at the bifurcation value, the trajectory is actually heteroclinic.
    \label{fig:homoclinic}}
    \end{center}
\end{figure}
It is clear that the $v$ coordinate of $S(\kappa)$, which we denote by $S_v(\kappa)$, is a decreasing function of $\kappa$, since increasing $\kappa$ increases the vertical distance traversed by the trajectory on $W^s$ between $u=0$ and $u=1$. Thus, $\frac{d}{d\kappa}S_v(\kappa)<0$. Similarly, $\frac{d}{d\kappa}U_v(\kappa)>0$, where $U_v$ is the $v$ component of $U$, since as $\kappa$ increases, the vertical distance traversed by the trajectory from $u=u_0$ to $u=0$ increases. Clearly, beyond the bifurcation point, for $\kappa>\kappa^*$, $U_v(\kappa)>S_v(\kappa)$, while before the bifurcation point, $U_v(\kappa)<S_v(\kappa)$, and at $\kappa=\kappa^*$, $U_v(\kappa^*)=S_v(\kappa^*)$. The theorem requires that $\left.\frac{d}{d\kappa}(U_v(\kappa)-S_v(\kappa))\right|_{\kappa=\kappa^*}\ne 0$. 

We cannot use the homoclinic bifurcation theorem, because we actually have a heteroclinic cycle. The above construction of $M$, $S(\kappa)$, and $U(\kappa)$ shows that a bifurcation takes place. We do not have a rigorous proof, however, to conclude that the stable limit cycle disappears, though this seems inevitable and numerical simulation supports the claim. It may also be the case that for any $\varepsilon>0$, we do have a homoclinic orbit at the bifurcation point. In that case, we still have a saddle point in the white wall corresponding to the $u=1$ edge of the $[-1,1]^2$ box, but no saddle points corresponding to $(1,-1)$ or $(u_0,-1)$. In fact, even in the limit $\varepsilon\to 0$, we have a homoclinic orbit in the large phase space, but the homoclinic bifurcation theorem cannot be applied directly to discontinuous systems. Thus, we have proven that a bifurcation occurs on one side of which the trajectory from the entry point remains confined to the switching domain, and on the other side of which it escapes. We tentatively identify this as a heteroclinic bifurcation in the limit system.

Note that these Hill function equations~\eqref{eq:uvexample1_hill} derive from macroscopic equations in the original slow variables:
\begin{eqnarray*}
    \dot{x}&=&(uv-u+v+\gamma_1\theta_1)-\gamma_1 x \\
    \dot{y}&=&\left(a_2 uv-2u+v+\gamma_2\theta_2\right) -\gamma_2 y\,,
\end{eqnarray*}
where $\gamma_1\theta_1\ge 3$ for a gene network, since $x$ should be confined to nonnegative values, and similarly $\gamma_2\theta_2\ge 3+a_2$. If we take $\theta_1=\theta_2=1$, then we may have $\gamma_1=3$ and $\gamma_2=3+a_2$, for example. Then the unstable fixed point (singular stationary point) in the white wall is at $x=\frac{3-a_2}{\gamma_1(a_2+1)}+\theta_1=\frac{82}{63}$ for $a_2=\frac{11}{10}$, and $y=\theta_2=1$. This comes from the fact that $v=\frac{2}{a_2+1}$ is its equilibrium value in the codimension-1 region corresponding to the white wall, and then the slow flow in $x$ is governed by its differential equation with $u=1$ and $v=\frac{2}{a_2+1}$:
\[ \dot{x}=\left(\frac{2}{a_2+1}(1)-1+\frac{2}{a_2+1}+\gamma_1\theta_1\right)-\gamma_1 x =\left(\frac{3-a_2}{a_2+1}+\gamma_1\theta_1\right)-\gamma_1 x\,, \]
so $x=\frac{1}{\gamma_1}\left(\frac{3-a_2}{a_2+1}+\gamma_1\theta_1\right)$ is its equilibrium value.

The saddle loop trajectory at the bifurcation point $\kappa=\kappa^*$ in the hidden dynamics corresponds in the large phase space to a solution that goes from the threshold intersection to the unstable fixed point in the white wall, then back through the $\mathcal{R}^{+-}$ domain to the black wall, and up again to the threshold intersection; see Figure \ref{fig:heteroclinic}. It should be emphasized that this macroscopic trajectory occurs only at the bifurcation point. Before the bifurcation, the solution remains in the threshold intersection, while afterwards, it exits to the $\mathcal{R}^{++}$ domain. If one considers $\varepsilon>0$ but small, then this picture should be preserved (whether in $x,y$ coordinates or $u,v$ coordinates): at the perturbed bifurcation point, the flow that enters the threshold intersection region (now nonzero in size) lies on the stable manifold of the perturbed saddle point in the white wall region, and the stable manifold of this saddle point coincides with the unstable manifold in a homoclinic loop. It is no longer heteroclinic, because there are no longer saddle points at $(1,-1)$ and $(u_0,-1)$. The flow now passes through the corresponding regions, which now have nonzero size. Thus, the homoclinic bifurcation theorem could be applied in this context, and then the conclusion drawn in the limit as $\varepsilon\to 0$. However, it would have to be demonstrated that the picture for $\varepsilon>0$ is indeed qualitatively the same.

While the microscopic ($\varepsilon$ scale) behavior of the Hill function system and the ramp function system differ, the large-scale qualitative behavior is the same, though the bifurcations in behavior (from convergence to limit cycle in the fast variables, to exit into the $\mathcal{R}^{++}$ domain) occur at different values of parameters. 

\subsubsection{Hopf bifurcation changing nature according to switching function}

Consider the system

\begin{equation}
\begin{split}
    u'&=\frac{11}{2}uv-u+5v  \\
    v'&=\kappa\left(a_2 uv-\frac{1}{2}u+v\right),
\end{split}
 \label{eq:uvexample2}
\end{equation}
which can be interpreted similarly to \eqref{eq:uvexample1} in the previous example.
If $\frac{3}{2}<a_2<\frac{11\sqrt{10}}{20}$, then \eqref{eq:uvexample2} is an instance of an (a4) system \cite{gh2015} with $f^{-+}$ pointing downwards and all other components of the vector fields having the same sign as in \eqref{eq:uvexample1}. Again, there is a supercritical Hopf bifurcation at $\kappa=1$, and the fixed point at $(0,0)$ is stable when $\kappa<1$ and unstable when $\kappa>1$, with a stable limit cycle around it. This can be checked by the conditions of subsection~\ref{ss_a4Hopf} as follows:
\[a_1=\frac{11}{2}, b_1=-1, c_1=5, d_1=0, b_2=-\frac12, c_2=1, d_2=0.\]
The Jacobian matrix at the origin is
\[ J^*=\left[\begin{array}{cc}-1 & 5\\ -\frac12\kappa & \kappa \end{array}\right]\,, \]
so $p=-1, q=5, r=-\frac12\kappa, s=\kappa$. 
At $\kappa=1$, $p=-s$, so $\text{Tr}(J^*)=0$ and $\text{det}(J^*)=ps-qr=\frac32$, so we have a Hopf bifurcation. The (a4) condition is satisfied, since $\frac rs=-\frac12<-\frac15=\frac pq < 0$. The supercriticality condition is satisfied since $p<0$ and
$a_1^2(a_2d_2-b_2c_2)-a_2^2(a_1d_1-b_1c_1)=\frac{121}{8}-5a_2^2>0$ for $a_2<\frac{11\sqrt{10}}{20}$.

Numerical simulation from the entry point $(u_0,v_0)=\left(-\frac {10}{13}, -1\right)$ to the box $[-1,1]^2$,  with $a_2=\frac{8}{5}$ and $\kappa=1.001$, shows an asymptotic approach to the limit cycle; however, it exits the box at the right edge first and then reenters.

If we translate this system to the Hill function context, while keeping variables in $[-1,1]$, we get
\begin{equation}
\begin{split}
    u'&=\frac14(1-u^2)\left(\frac{11}{2}uv-u+5v\right)=-\frac{11}{8}u^3v+\frac14u^3-\frac54u^2v+\frac{11}{8}uv-\frac14u+\frac54v  \\
    v'&=\frac14\kappa(1-v^2)\left(\frac85 uv-\frac{1}{2}u+v\right) =\kappa\left(-\frac25uv^3+\frac18uv^2-\frac14v^3+\frac25uv-\frac18u+\frac14v\right)\,.
\end{split}
 \label{eq:uvexample2_hill}
\end{equation}
The Jacobian matrix at the origin defines $p=-\frac14, q=\frac54, r=-\frac18, s=\frac14$ for the bifurcation value $\kappa=1$. Thus,
\[ T=\left[\begin{array}{cc} \frac54 & 0 \\ \frac14 & -\frac{\sqrt{6}}{8}\end{array} \right]\,,\quad\quad T^{-1}=\left[\begin{array}{cc} \frac45 & 0 \\ \frac{4\sqrt{6}}{15} & -\frac{4\sqrt{6}}{3}\end{array} \right]\,, \]
and
\begin{eqnarray*}
    \left[\begin{array}{c} x' \\ y' \end{array} \right] &=& 
    \left[\begin{array}{c} -\frac{11}{10}u^3v+\frac15u^3-u^2v+\frac{11}{10}uv+ \ldots \\ \frac{\sqrt{6}}{15}u^3+\frac{\sqrt{6}}{3}v^3-\frac{\sqrt{6}}{3}u^2v-\frac{\sqrt{6}}{6}uv^2-\frac{\sqrt{6}}{6}uv+ \ldots \end{array} \right],
\end{eqnarray*} 
where fourth-order terms (as well as linear and constant ones) are not written explicitly since they do not affect the result, given that $(u^*,v^*)=(0,0)$. We get
\begin{eqnarray*}
f &=& \frac15\left(\frac54x\right)^3-\left(\frac54x\right)^2\left(\frac14x-\frac{\sqrt{6}}{8}y\right)+\frac{11}{10}\left(\frac54x\right)\left(\frac14x-\frac{\sqrt{6}}{8}y\right)\\
&=& \frac{25\sqrt{6}}{128}x^2y+\frac{11}{32}x^2-\frac{11\sqrt{6}}{64}xy,
\end{eqnarray*} 
so that $f_x=\frac{25\sqrt{6}}{64}xy+\frac{11}{16}x-\frac{11\sqrt{6}}{64}y$, $f_{xx}=\frac{25\sqrt{6}}{64}y+\frac{11}{16}$, $f_{xy}=\frac{25\sqrt{6}}{64}x-\frac{11\sqrt{6}}{64}$, and $f_{yy}=f_{xyy}=f_{xxx}=0$. As for $g$,
\begin{eqnarray*}
g &=& \frac{\sqrt{6}}{15}\left(\frac54x\right)^3+\frac{\sqrt{6}}{3}\left(\frac{1}{64}x^3-\frac{3\sqrt{6}}{256}y^3-\frac{3\sqrt{6}}{128}x^2y+\frac{9}{128}xy^2\right)\\
&-&\frac{\sqrt{6}}{3}\left(\frac54x\right)^2\left(\frac14x-\frac{\sqrt{6}}{8}y\right)-\frac{\sqrt{6}}{6}\left(\frac54x\right)\left(\frac{1}{16}x^2+\frac{3}{32}y^2-\frac{\sqrt{6}}{16}xy\right)\\
&-&\frac{\sqrt{6}}{6}\left(\frac54x\right)\left(\frac14x-\frac{\sqrt{6}}{8}y\right)\\
&=& -\frac{\sqrt{6}}{128}x^3-\frac{3}{128}y^3+\frac{27}{64}x^2y+\frac{\sqrt{6}}{256}xy^2-\frac{5\sqrt{6}}{96}x^2+\frac{5}{32}xy,
\end{eqnarray*} 
so that $g_x=-\frac{3\sqrt{6}}{128}x^2+\frac{27}{32}xy+\frac{\sqrt{6}}{256}y^2-\frac{5\sqrt{6}}{48}x+\frac{5}{32}y$, $g_y=-\frac{9}{128}y^2+\frac{27}{64}x^2+\frac{\sqrt{6}}{128}xy+\frac{5}{32}x$, $g_{xx}=-\frac{3\sqrt{6}}{64}x+\frac{27}{32}y-\frac{5\sqrt{6}}{48}$, $g_{xy}=\frac{27}{32}x+\frac{\sqrt{6}}{128}y+\frac{5}{32}$, $g_{yy}=-\frac{9}{64}y+\frac{\sqrt{6}}{128}x$, $g_{xxy}=\frac{27}{32}$, and $g_{yyy}=-\frac{9}{64}$. Eventually we get
\begin{eqnarray*}
a &=& \frac{1}{16}(g_{xxy}+g_{yyy})+\frac{1}{16\omega}(f_{xx}f_{xy}-g_{xx}g_{xy}-f_{xx}g_{xx})=\\
&=&\frac{1}{16}\cdot\frac{45}{64}+\frac{1}{2}\left(-\frac{11}{16}\cdot\frac{11}{64}+\frac{5}{48}\cdot\frac{5}{32}+\frac{11}{16}\cdot\frac{5}{48}\right)=\frac{59}{2048}>0.
\end{eqnarray*}

In any case, the important fact is that in the system with ramp switching functions~\eqref{eq:uvexample2} we had a supercritical Hopf bifurcation and the appearance of a stable limit cycle, but when we replace the ramps with Hill functions~\eqref{eq:uvexample2_hill}, the Hopf bifurcation at the same bifurcation parameter value (and at the same fixed point) is subcritical, and no stable limit cycle appears. Numerical simulations strongly suggest the presence of an unstable limit cycle around the fixed point when $\kappa$ is slightly below the bifurcation values (e.g., $\kappa=0.99$).

\section{Concluding remarks}
\label{s_conclusions}
In this paper we analyzed the problem of nonuniqueness of solutions of piecewise smooth ODEs, which are interpreted as the limit of a class of regularized problems defined accordingly.
As observed in \cite{gh2022}, difficulties may arise upon approaching the intersection of two (or more) discontinuity hypersurfaces. 
In particular, in this article, we have extensively explored the consequences of the nonuniquemess in the case (a4) of the classification provided in \cite{gh2022}.
The contributions of this work are mainly twofold.

On the one hand we have shown, by means of both extensive numerical tests and analytic considerations, that the algorithm proposed in \cite{gh2022} to determine the behavior of a solution after entering a codimension-2 surface 
$\Sigma_\alpha \cap \Sigma_\beta$, relying on the underlying regularization, can only be extended to the cases labeled as ambiguous  by integrating the hidden dynamics and providing the numerical integrator by an event detection algorithm, able to recognize when the solution of the hidden dynamical system either reaches an equilibrium or a periodic orbit, or when one or both of the components diverge. We have shown that in certain cases one can obtain both a classical solution and a codimension-2 sliding mode by just changing the switching function. 
We expect that in higher codimension (see, e.g., \cite{gh2023}), the use of a numerical integrator for interpreting the hidden dynamics will be necessary in more and more situations, with the additional complication that further kinds of invariants could appear (e.g., chaotic attractors \cite{MEV2013a}).

On the other hand, we observed that a codimension-2 sliding mode can correspond to either an equilibrium or a periodic solution in the hidden dynamics. More specifically, we considered a class of switching functions defined by a parameter and commented on various kinds of bifurcation phenomena, with particular attention to Hopf bifurcations. A periodic attractor in the hidden dynamics can lead to behavior that is sensitive to initial conditions or parameters when the solution reaches a higher codimension switching domain. As a consequence, uniqueness in the limit solution can be lost.

\appendix
\section{Appendix}\label{s_appendix}
 Here, we prove the claim that the solution of system~\eqref{eq:uvexample1_hill} with $\kappa=1$ and $a_2>1$ sufficiently close to $1$ (for example, $a_2=\frac{11}{10}$), starting from the entry point at $(u,v)=(-\frac 12, -1)$, does not escape to the corner at $(u,v)=(1,1)$ but remains bounded in the interior of the hidden domain $(-1,1)^2$.

\begin{figure}[ht]
    \begin{center}
        \includegraphics[width=10cm]{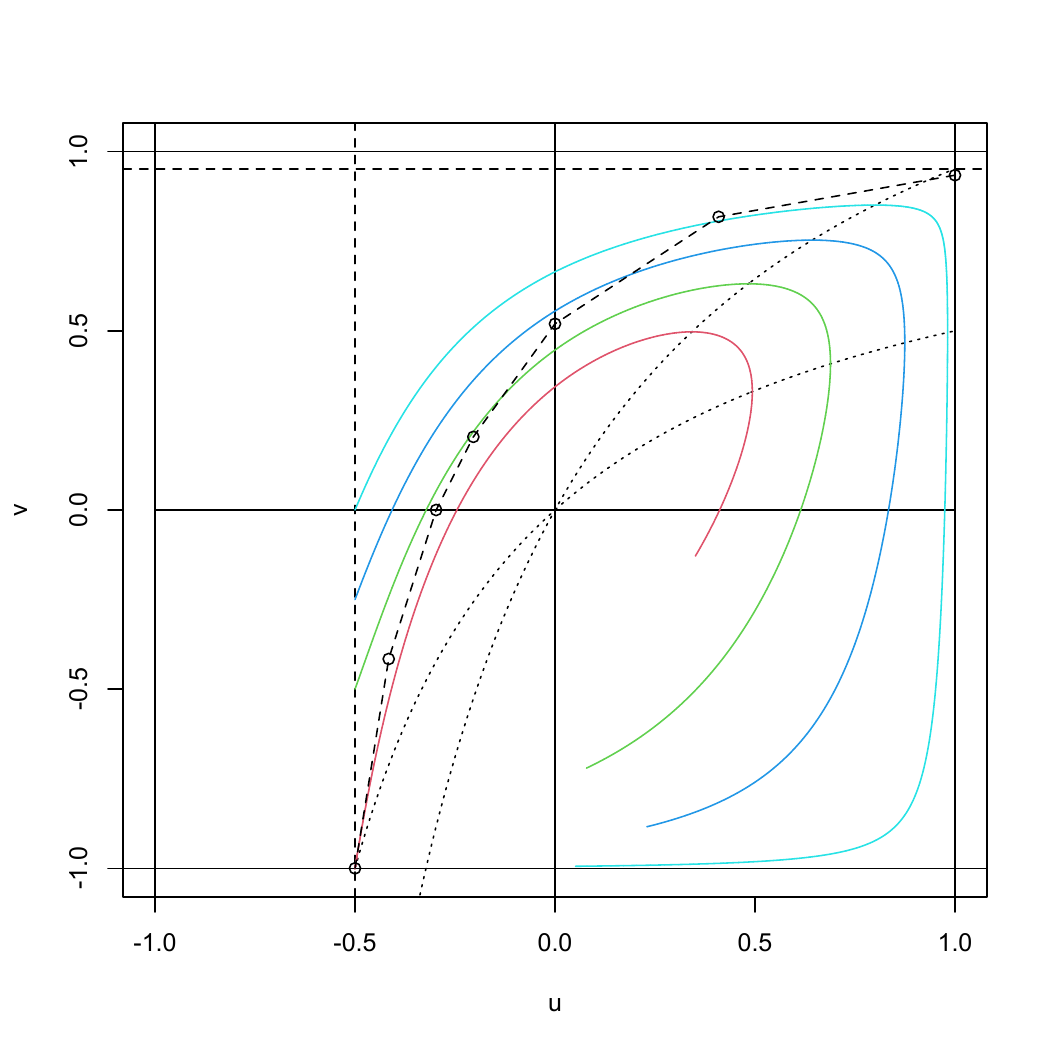}    
    \caption{Invariant region for System~\eqref{eq:uvexample1_hill} with $\kappa=1$ and $a_2=\frac{11}{10}$. Dashed lines: upper and left edges of the rectangle $R$, and the six line segments that form the upper boundary of the invariant region. Dotted lines: $u$- and $v$-nullclines. Coloured lines: trajectories.
    \label{fig:invariant}}
    \end{center}
\end{figure}

 We construct an invariant region inside $R$ ({\em i.e.}, below $v=\frac{2}{a_2+1}$); see Figure~\ref{fig:invariant}. Trajectories starting inside $R$ cannot leave $R$ through $u=1$ or $v=-1$, except potentially at the fixed point $(1,\frac{2}{a_2+1})$. Note that $u=1$ and $v=-1$ are themselves solution curves, $(1,-1)$ is a saddle point, so solutions from the interior of $R$ cannot reach it, and $(-\frac 12, -1)$ is a saddle point with $v=-1$ as the stable manifold, so solutions from the interior of $R$ cannot reach this point either. They also cannot leave $R$ through $u=-\frac 12$, since $\dot{u}>0$ there. This leaves the possibility of leaving $R$ through $v=\frac{2}{a_2+1}$. 

 However, we can show that the sequence of six straight line segments shown in Figure~\ref{fig:invariant} can only be crossed from left to right and thus the region of $R$ below this boundary is also invariant. The endpoints of these six line segments lie on the lines $v=-1$, $v=u$, $v=0$, $v=-u$, $u=0$, $v=2u$, and $u=1$, respectively, and are specified in Table~\ref{tab:invariant}. In each case, the slope of the line $v=m_i(u-u_i)+v_i$ is the direction of the vector field $\frac{dv}{du}$ at the lower (left) end of the segment. At $(u_0,v_0)=(-\frac 12, -1)$, one must use the slope of the unstable eigenvector of the Jacobian matrix at this point. The Jacobian matrix here is 
 \begin{eqnarray*}
    J(u_0,v_0)&=&\frac 14 \left[\begin{array}{cc}(1-u_0^2)(v_0-1) & (1-u_0^2)(u_0+1)\\ 0 & (-2v_0)(a_2u_0v_0-2u_0+v_0) \end{array}\right] \\
&=&\left[\begin{array}{cc}-\frac{3}{2} & -\frac{3}{8}\\ 0 & a_2\end{array}\right],
\end{eqnarray*} 
with unstable eigenvector
$$\xi=\left[\begin{array}{c} 3 \\ 12+8a_2 \end{array}\right].$$

 \begin{table}[h]
 \centering
 \begin{tabular}{|c|c|c|l|c|}
 \hline
 $i$ & $u_i$ & $v_i$ & on & $m_i$ \\
 \hline\hline
 0 & $-\frac 12$ & $-1$ & $v=-1$ & $\frac{12+8a_2}{3}$ \\ [3pt] \hline
 1 & $\frac{2-m_0}{2(m_0-1)}$ & $\frac{2-m_0}{2(m_0-1)}$ & $v=u$ & $\frac{a_2u_1-1}{u_1}$ \\ [3pt] \hline
 2 & $\frac{(m_1-1)u_1}{m_1}$ & 0 & $v=0$ & $\frac{2}{1-u_2^2}$ \\ [3pt] \hline
 3 & $\frac{m_2u_2}{m_2+1}$ & $-\frac{m_2u_2}{m_2+1}$ & $v=-u$ & $\frac{a_2u_3+3}{u_3+2}$ \\ [3pt] \hline
 4 & $0$ & $-u_3(m_3+1)$ & $u=0$ & $1-v_4^2$ \\ [3pt] \hline
 5 & $\frac{v_4}{1+v_4^2}$ & $\frac{2v_4}{1+v_4^2}$ & $v=2u$ & $\frac{2a_2u_5(1-2u_5)}{1-u_5^2}$ \\ [3pt] \hline
 6 & $1$ & $u_5(2-m_5)+m_5$ & $u=1$ & \\ [3pt] \hline
 \end{tabular}
 \caption{Line segments definining the upper (left) boundary of an invariant region in Figure~\ref{fig:invariant}.
 \label{tab:invariant}}
 \end{table}

For each line segment, $v=m_i(u-u_i) +v_i$, the vector field crosses from left to right if $\frac{dv}{du}<m_i$ on the segment (except at $(u_i,v_i)$, where $\frac{dv}{du}=m_i$ by construction). This inequality can be written 
$$\frac{dv}{du}=\frac{(1-v^2)(a_2uv-2u+v)}{(1-u^2)(uv-u+v)}<m_i\,,$$
or 
$$ (1-v^2)(a_2uv-2u+v)<m_i(1-u^2)(uv-u+v)\,,$$
since $1-u^2>0$ and $uv-u+v>0$ above the $u$-nullcline. Thus, the condition to check for each $i=0,\ldots ,5$ is 
\begin{eqnarray}
(1-[m_i(u-u_i)+v_i]^2)((a_2u+1)[m_i(u-u_i)+v_i]-2u)\hspace*{8mm} \nonumber\\
-m_i(1-u^2)((u+1)[m_i(u-u_i)+v_i]-u)<0
\label{eq:inv_condition}\end{eqnarray}
for $u_i<u<u_{i+1}$. This is a quartic polynomial in $u$, so it is possible to check exactly whether it holds for the specified $u$-interval. In fact, $u=u_i$ is a root of this quartic for each $i$ by construction, so in principle it can be reduced to $(u-u_i)$ times a cubic. Exact roots of this cubic can be found explicitly.

Note that although at each segment endpoint the slope of the next segment is exactly the slope of the vector field, a trajectory cannot go above the next line segment, as this would contradict the flow direction (the trajectory must be below any line segment of slope $>m_i$; then consider the limit as the slope of the segment approaches $m_i$).

For $a_2=\frac{11}{10}$, we confirmed that the condition~\eqref{eq:inv_condition} is satisfied for each of the six segments. Furthermore, $v_6<\frac{2}{a_2+1}$ ($v_6\approx 0.93432$ and $\frac{2}{a_2+1}=\frac{20}{21}\approx 0.95238$), so the boundary formed by the six line segments lies entirely in $R$, and thus forms the upper boundary of an invariant region. The trajectory from the entry point $(-\frac 12,-1)$ enters this region and cannot leave.
\section*{Acknowledgments}
RE acknowledges support from the Gran Sasso Science Institute for a visit during which much of this work was done. He also acknowledges support by Discovery Grant RGPIN-2017-04042 from the Natural Sciences and Engineering Research Council of Canada (NSERC).

NG acknowledges that his research was supported by funds from the Italian 
MUR (Ministero dell'Universit\`a e della Ricerca) within the PRIN 2017 Project 
``Discontinuous dynamical systems: theory, numerics and applications''. 
Moreover he acknowledges funding from Dipartimento di Eccellenza.

AA and NG are members of the INdAM-GNCS (Gruppo Nazionale di Calcolo Scientifico).


\end{document}